\theoremstyle{plain}
\newtheorem{Lem}{Lemma}[section]
\newtheorem{Thm}[Lem]{Theorem}
\theoremstyle{definition} % non-italicized text in theorem environment
\newtheorem{Rk}[Lem]{Remark}
\newtheorem{Def}[Lem]{Definition}}
\newcommand{\zig}{\addtocounter{Lem}{1}\tag{\theLem}}
\def\:{\colon}
\DeclareMathOperator*{\colim}{colim}
\DeclareMathOperator*{\lims}{lim^\mathit{s}}
\begin{document}
\title{Several homotopy fixed point spectral sequences in 
telescopically localized algebraic $K$-theory}
\author{Daniel G. Davis}
\address{University of Louisiana at Lafayette, Department of Mathematics, Lafayette, Louisiana, USA}
\email{dgdavis@louisiana.edu}
%\subjclass[2010]{19D55, 55N15, 55P42, 55T99}
\begin{abstract}
Let $n \geq 1$, $p$ a prime, and $T(n)$ any representative of the 
Bousfield class of the telescope $v_n^{-1}F(n)$ of a finite type $n$ complex. 
Also, let 
$E_n$ be the Lubin-Tate spectrum, $K(E_n)$ its algebraic $K$-theory spectrum, and $G_n$ the extended Morava stabilizer group, a profinite group. Motivated by an Ausoni-Rognes conjecture, 
we show that there are two spectral sequences
\[{^{I}}\mspace{-3mu}E_2^{s,t} \Longrightarrow \pi_{t-s}((L_{T(n+1)}K(E_n))^{hG_n})
\Longleftarrow {^{II}}\mspace{-2mu}E_2^{s,t}\] with common abutment $\pi_\ast(-)$ of the continuous homotopy fixed points of $L_{T(n+1)}K(E_n)$, where 
%\[{^{I}}\mspace{-3mu}E_2^{s,t} = H^s_\mathrm{cont}(G_n; \{\pi_t(K(E_n) \otimes v_{n+1}^{-1}M_j)\}_{j \geq 0}),\] 
${^{I}}\mspace{-3mu}E_2^{s,t}$ is continuous cohomology with coefficients in a certain 
tower of discrete $G_n$-modules. If the tower satisfies 
the Mittag-Leffler condition, then there are 
continuous cochain cohomology groups \[{^{I}}\mspace{-3mu}E_2^{\ast,\ast} \cong H^\ast_\mathrm{cts}(G_n, \pi_\ast(L_{T(n+1)}K(E_n))) \cong 
{^{II}}\mspace{-2mu}E_2^{\ast,\ast}.\] We isolate 
two hypotheses, the first of which is true when $(n,p) = (1,2)$, that imply 
%that there is an 
$(L_{T(n+1)}K(E_n))^{hG_n} \simeq L_{T(n+1)}K(L_{K(n)}S^0)$. Also, we show 
that there is a spectral sequence
\[H^s_\mathrm{cts}(G_n, \pi_t(K(E_n) \otimes T(n+1))) \Longrightarrow 
\pi_{t-s}((K(E_n) \otimes T(n+1))^{hG_n}).\]
\end{abstract}

\maketitle
\section{Introduction}
\subsection{The basic characters in this work and a presentation of $L_{T(n+1)}K(E_n)$ as a $G_n$-spectrum}\label{oneone}
Let $n \geq 1$, let $p$ be any prime, and let $\mathrm{Sp}$ be the symmetric monoidal $\infty$-category of spectra. 
Let $F(n)$ denote a finite type $n$ complex and let $v_{n}^{-1}F(n)$ be the telescope of a $v_{n}$-self-map $v$ on $F(n)$. By \cite[Lemma 4]{global} and \cite[page 103]{Mahowald/Sadofsky}, $v_n^{-1}F(n)$ is independent of the choice of $v$ and the Bousfield class of $v_{n}^{-1}F(n)$ is independent of the choices for $F(n)$ and $v$. As is common, we let $T(n)$ denote a representative of this Bousfield class. 
Also, given a ring spectrum $A$ (that is, an algebra in $\mathrm{Sp}$; also called an $\mathbb{E}_1$-ring spectrum), $K(A)$ denotes 
the algebraic $K$-theory spectrum of $A$. 

Let $K(n)$ be the $n$th Morava $K$-theory spectrum and 
let $E_n$ be the Lubin-Tate spectrum with 
\[\pi_\ast(E_n) = W(\mathbb{F}_{p^n})[[u_1, ..., u_{n-1}]][u^{\pm 1}],\] where 
the complete power series ring over the Witt vectors for the field with $p^n$ elements is in 
degree $0$ and $|u| = -2$. Also, $G_n$ denotes the $n$th extended Morava stabilizer group, a profinite group which acts on $E_n$ by maps of commutative algebras (for 
more background on this, see \cite{DH, 
Pgg/Hop0, AndreQuillen}). As in \cite[(1.4)]{DH}, let 
\[G_n = U_0 \supsetneq U_1 \supsetneq \cdots \supsetneq U_i \supsetneq \cdots\] be a 
descending chain of open normal subgroups of $G_n$, with $\bigcap_{i \geq 0} U_i = \{e\}$. 
Then by \cite[Theorem 1]{DH}, \cite[Section 8]{joint}, \cite{Fausk}, and \cite[Theorem 1.2]{QuickLT}, 
there is a diagram 
\begin{equation}\label{diagram}\zig
E_n^{hG_n} = E_n^{hU_0} \to E_n^{hU_1} \to \cdots \to E_n^{hU_i} \to \cdots\end{equation} in the subcategory $\mathrm{CAlg}(\mathrm{Sp})$ of 
commutative algebras in $\mathrm{Sp}$ consisting of continuous homotopy fixed point 
spectra.

As in \cite[Proposition 4.22; Proposition 7.10, (d)]{HS}, we let 
\[M_0 \leftarrow M_1 \leftarrow \cdots \leftarrow M_j \leftarrow \cdots \] 
be a tower of generalized 
Moore spectra: each $M_j$ is finite of type $n+1$ and an atomic $\mu$-spectrum and the tower has 
the property that there is an equivalence
\[L_{F(n+1)} Z \simeq \lim_{j \geq 0} (Z \otimes M_j),\] for any spectrum $Z$. Let $MU_{(p)}$ denote the $p$-localization of $MU$. For each 
$j$, there is a sequence $a(j)_0, a(j)_1, ..., a(j)_{n}$ of powers of $p$ such that 
\[(MU_{(p)})_\ast(M_j) \cong (MU_{(p)})_\ast/(v_0^{a(j)_0}, v_1^{a(j)_1}, ..., v_{n}^{a(j)_{n}})\] and it is common 
to write 
\[M_j =M(v_0^{a(j)_0}, v_1^{a(j)_1},..., v_{n}^{a(j)_{n}}).\] 

As recalled in Definition \ref{finitelocalize} (see \cite[Definition 2.6]{lifeafter}, \cite{Mahowald/Sadofsky}), we use $L_{n+1}^f$ to denote the Bousfield localization functor that is often referred to as ``finite $E(n+1)$-localization" 
\cite{finitebousfield}. We can now state a result that we use in Section \ref{1.2} to define the continuous $G_n$-homotopy fixed points of 
$L_{T(n+1)}K(E_n)$. 

\begin{Thm}\label{gettingtheballrolling}
For every $n \geq 1$ and all primes $p$, there is an equivalence
%\begin{equation}\label{great!}\zig
\[L_{T(n+1)}K(E_n) \simeq 
\lim_{j \geq 0} (\colim_{i \geq 0} (K(E_n^{hU_i}) \otimes L_{n+1}^f M_j)).\]
%\end{equation}
\end{Thm}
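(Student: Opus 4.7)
My plan is to decouple the two limiting procedures on the right-hand side: the inverse limit over $j$ should capture $T(n+1)$-localization, and the filtered colimit over $i$ should capture the continuity of $E_n$ as a $G_n$-spectrum.

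The first ingredient is a general formula: for any spectrum $Z$,
\[
L_{T(n+1)} Z \simeq \lim_{j \geq 0} \bigl(Z \otimes L_{n+1}^f M_j\bigr).
\]
The tower of Moore spectra has the property $L_{F(n+1)} W \simeq \lim_j (W \otimes M_j)$. Since $L_{n+1}^f$ is smashing and each $M_j$ has type $n+1$, the spectrum $L_{n+1}^f M_j$ represents the Bousfield class of $T(n+1)$; in particular, each $Z \otimes L_{n+1}^f M_j$ is $T(n+1)$-local (for instance because $L_{T(n+1)}$ is smashing and $L_{T(n+1)} M_j \simeq L_{n+1}^f M_j$ on type $n+1$ complexes). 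Combining with the tower formula and the fact that inverse limits of local spectra are local yields the displayed equivalence. Setting $Z = K(E_n)$ gives
\[
L_{T(n+1)} K(E_n) \simeq \lim_{j \geq 0} \bigl(K(E_n) \otimes L_{n+1}^f M_j\bigr).
\]

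The second ingredient is the identification
\[
K(E_n) \otimes L_{n+1}^f M_j \simeq \colim_{i \geq 0} \bigl(K(E_n^{hU_i}) \otimes L_{n+1}^f M_j\bigr).
\]
Based on the Devinatz-Hopkins construction as reformulated in \cite{joint, Fausk, QuickLT}, one has $E_n \simeq \colim_{i \geq 0} E_n^{hU_i}$ in an appropriate $\infty$-category of commutative algebras carrying the continuous $G_n$-action. Since algebraic $K$-theory of ring spectra preserves filtered colimits, this transfers to $K(E_n) \simeq \colim_i K(E_n^{hU_i})$; smashing with $L_{n+1}^f M_j$, and using that $\otimes$ preserves filtered colimits, gives the displayed equivalence.

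Combining the two ingredients and taking $\lim_j$ yields the theorem. I anticipate that the main obstacle is the second ingredient: precisely justifying the presentation of $E_n$ as a filtered colimit of the $E_n^{hU_i}$ in a setting in which algebraic $K$-theory is defined and preserves this colimit. A secondary technical point is the commutation of $\lim_j$ with $L_{T(n+1)}$ in the first ingredient, which is handled by observing that each term $Z \otimes L_{n+1}^f M_j$ is already $T(n+1)$-local.
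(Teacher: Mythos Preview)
Your first ingredient is essentially the paper's Theorem~\ref{split} and is fine (though your parenthetical justification that ``$L_{T(n+1)}$ is smashing'' is not correct; the paper instead uses that $L_{n+1}^f M_j \simeq v_{n+1}^{-1}M_j$ and that smashing with the telescope of an atomic $\mu$-spectrum lands in $T(n+1)$-local spectra).

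The real gap is in your second ingredient. You write that $E_n \simeq \colim_{i} E_n^{hU_i}$ ``in an appropriate $\infty$-category of commutative algebras,'' and then invoke that $K$ preserves filtered colimits. But the Devinatz--Hopkins result is
\[
E_n \simeq L_{K(n)}\bigl(\colim_{i \geq 0} E_n^{hU_i}\bigr) \simeq L_{T(n)}\bigl(\colim_{i \geq 0} E_n^{hU_i}\bigr),
\]
and the uncompleted colimit $\colim_i E_n^{hU_i}$ in $\mathrm{Sp}$ (or in $\mathrm{CAlg}(\mathrm{Sp})$) is \emph{not} equivalent to $E_n$. Algebraic $K$-theory does preserve filtered colimits of ring spectra, but it is not a functor on the $T(n)$-local category that commutes with $T(n)$-local colimits, so from the colimit preservation of $K$ alone you only get $\colim_i K(E_n^{hU_i}) \simeq K(\colim_i E_n^{hU_i})$, not $K(E_n)$.

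What is actually needed is that the map $\colim_i K(E_n^{hU_i}) \to K(E_n)$ becomes an equivalence after $T(n+1)$-localization, equivalently after smashing with $L_{n+1}^f M_j$. This is exactly the content of the purity theorem of Land--Mathew--Meier--Tamme \cite[Corollary~4.31]{LMMT}, which the paper invokes explicitly: for a filtered diagram $\{A_i\}$ of $T(n)$-local ring spectra,
\[
L_{T(n+1)}K\bigl(L_{T(n)}(\colim_i A_i)\bigr) \simeq L_{T(n+1)}\bigl(\colim_i K(A_i)\bigr).
\]
Your displayed equivalence $K(E_n)\otimes L_{n+1}^f M_j \simeq \colim_i (K(E_n^{hU_i})\otimes L_{n+1}^f M_j)$ is true, but it is a reformulation of this deep input rather than a consequence of $K$ preserving filtered colimits. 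Once you cite LMMT here, your argument and the paper's coincide.
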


On the right-hand side of the above equivalence -- and elsewhere in this paper -- we use ``$\colim$" to denote the colimit in $\mathrm{Sp}$; any colimit in a subcategory of $\mathrm{Sp}$ with additional structure is marked as such. The proof of Theorem \ref{gettingtheballrolling}, which is in Section \ref{proofofrollingball}, uses 
a deep result from \cite{LMMT} in a key way. 

The $G_n$-action on $E_n$ induces a natural $G_n/U_i$-action on each 
$E_n^{hU_i}$, and hence, for each $i$ and $j$, $K(E_n^{hU_i}) \otimes L_{n+1}^f M_j$ 
has a natural $G_n/U_i$-action, with $G_n/U_i$ acting trivially on $L_{n+1}^f M_j$ (and 
diagonally on the smash product). It follows that in Theorem \ref{gettingtheballrolling}, the equivalence is $G_n$-equivariant. 

\begin{Def}
Let $G$ be any profinite group. As in \cite{ClausenMathewInventiones}, let 
$\mathrm{Sh}(\mathcal{T}_G, \mathrm{Sp})$ denote the $\infty$-category of $\mathrm{Sp}$-valued sheaves 
on the Grothendieck site $\mathcal{T}_G$ of finite continuous $G$-sets. Also, as in [op. cit.], 
$\mathrm{PSh}_{\Pi}(\mathcal{T}_G,\mathrm{Sp})$ denotes the $\infty$-category of presheaves of spectra on $\mathcal{T}_G$ that send finite coproducts in $\mathcal{T}_G$ to finite products (see also \cite[Section 2]{tmfbook}), 
and $\mathrm{PSh}(\mathcal{O}_G, \mathrm{Sp})$ is the $\infty$-category of presheaves 
on the orbit category of $G$ consisting of quotients of $G$ by open subgroups.
\end{Def}

By \cite[Theorem 1]{DH} and \cite[Section 8]{joint}, there is a presheaf $F$ in 
$\mathrm{PSh}(\mathcal{O}_{G_n}, \mathrm{Sp})$ defined by 
\[F \: G_n/U \mapsto E_n^{hU}, \ \ \ \text{$U$ an open subgroup of $G_n$,}\] 
where, as before, $E_n^{hU}$ is the continuous homotopy fixed point spectrum and $F$ 
is actually a presheaf of commutative algebras. Thus, there 
is a diagram $\{\mathcal{P}_j\}_{j \geq 0}$ in 
$\mathrm{PSh}(\mathcal{O}_{G_n}, \mathrm{Sp})$, with each presheaf $\mathcal{P}_j$ 
defined by
\[\mathcal{P}_j \: G_n/U \mapsto K(E_n^{hU}) \otimes L_{n+1}^fM_j, \ \ \ U \ \text{an open subgroup of} \ G_n.\] 

\subsection{Continuous homotopy fixed points for $L_{T(n+1)}K(E_n)$ and two associated homotopy fixed point spectral sequences}\label{1.2}
We say that a profinite group $G$ has ``finite cohomological dimension" if there is some integer $s_0$ such that the continuous cohomology $H^s_c(G,M) = 0$, whenever $s > s_0$ and $M$ is any discrete $G$-module. Recall that $G_n$ has finite virtual cohomological dimension: that is, $G_n$ contains an open subgroup $U$ of 
finite cohomological dimension. Also, 
as recalled (with more generality) in Section \ref{ctshfps}, if 
\[X_0 \to X_1 \to \cdots \to X_i \to \cdots\] is a diagram of $G_n$-spectra such that for 
each $i$, the $G_n$-action on $X_i$ factors through $G_n/U_i$ (that is, $U_i$ acts trivially on $X_i$), then there is the continuous homotopy fixed point spectrum $(\colim_{i \geq 0} X_i)^{hG_n}$, given by the totalization of an $\{i \geq 0\}$-indexed colimit of certain familiar 
cosimplicial spectra.

It is now natural to make the following definition, which follows a familiar template in algebraic 
$K$-theory (for example, see \cite[Proposition 3.1.2, last paragraph of 3.1, proof of Theorem 4.2.6]{Geisser}).

\begin{Def}\label{fun}
Let $n \geq 1$ and let $p$ be any prime. 
There is the continuous homotopy fixed point spectrum
\[
(L_{T(n+1)}K(E_n))^{hG_n} := \Bigl(\,\lim_{j \geq 0} \bigl(\colim_{i \geq 0} (K(E_n^{hU_i}) \otimes L_{n+1}^f M_j)\bigr)\mspace{-0mu}\Bigr)^{\negthinspace hG_n},
\] where the right-hand side is given by
\[\Bigl(\,\lim_{j \geq 0} \bigl(\colim_{i \geq 0} (K(E_n^{hU_i}) \otimes L_{n+1}^f M_j)\bigr)\mspace{-0mu}\Bigr)^{\negthinspace hG_n} \mspace{-6mu}:= 
\lim_{j \geq 0} \bigl(\colim_{i \geq 0} (K(E_n^{hU_i}) \otimes L_{n+1}^f M_j)\bigr)^{hG_n}.\] 
For each $j \geq 0$,  
by Theorem \ref{sheafone}, there is an equivalence
\[\bigl(\colim_{i \geq 0} (K(E_n^{hU_i}) \otimes L_{n+1}^f M_j)\bigr)^{hG_n} 
= (\colim_{i \geq 0} \mathcal{P}_j(G_n/U_i))^{hG_n} \simeq \widetilde{\mathcal{P}_j}_{\scriptscriptstyle{\prod}}(\ast)\] with the global sections of the 
Postnikov sheafification $\widetilde{\mathcal{P}_j}_{\scriptscriptstyle{\prod}}$ of the presheaf 
${\mathcal{P}_j}_{\scriptscriptstyle{\prod}}$ in $\mathrm{PSh}_{\scriptscriptstyle{\prod}}(\mathcal{T}_{G_n}, \mathrm{Sp})$ 
that is induced by $\mathcal{P}_j$. 
\end{Def}

To state the next result, whose proof is in Section \ref{sectiondiscrete}, we need the 
following notation. 
Given a profinite group $G$ and a tower $\{A_j\}_{j \geq 0}$ of discrete $G$-modules, we let 
$H^\ast_\mathrm{cont}(G; \{A_j\}_{j \geq 0})$ denote continuous cohomology in the sense of Jannsen \cite{Jannsen} and $H^\ast_\mathrm{cts}(G, \lim_{j \geq 0} A_j)$ is continuous cochain 
cohomology with coefficients in the stated topological $G$-module. Also, if $X^\bullet$ is a cosimplicial spectrum, then we let ${^{II}}\mspace{-2mu}E_2^{s,t}$ be the $E_2$-term of the associated homotopy spectral sequence
\[{^{II}}\mspace{-2mu}E_2^{s,t} := \lims_\Delta \pi_t(X^\bullet) \Longrightarrow \pi_{t-s}(\mathrm{Tot}(X^\bullet)).\]
 
\begin{Thm}\label{spectralsequences}
Let $n \geq 1$ and let $p$ be any prime. There are conditionally 
convergent homotopy fixed point spectral sequences 
\[{^{I}}\mspace{-3mu}E_2^{s,t} = H^s_\mathrm{cont}(G_n; \{\pi_t(K(E_n)
\otimes v_{n+1}^{-1}M_j)\}_{j \geq 0}) \Longrightarrow \pi_{t-s}((L_{T(n+1)}K(E_n))^{hG_n})\] 
and  
\[{^{II}}\mspace{-2mu}E_2^{s,t} \Longrightarrow \pi_{t-s}((L_{T(n+1)}K(E_n))^{hG_n}).\] If 
the tower $\{\pi_t(K(E_n) \otimes v_{n+1}^{-1}M_j)\}_{j \geq 0}$ satisfies 
the Mittag-Leffler condition for every $t \in \mathbb{Z}$, then for all $s \geq 0$, there are isomorphisms
\[{^{I}}\mspace{-3mu}E_2^{s,\ast} \cong H^s_\mathrm{cts}(G_n, \pi_\ast(L_{T(n+1)}K(E_n))) \cong 
{^{II}}\mspace{-2mu}E_2^{s,\ast},\] where for each $t$,  $\pi_t(L_{T(n+1)}K(E_n)) \cong 
\lim_{j \geq 0} \pi_t(K(E_n) \otimes v_{n+1}^{-1}M_j)$.   
\end{Thm}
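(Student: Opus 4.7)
The plan is to build both spectral sequences from the common cosimplicial spectrum
\[X^\bullet := \lim_{j \geq 0} X^\bullet_j, \quad X^\bullet_j := \colim_{i \geq 0} C^\bullet(G_n; K(E_n^{hU_i}) \otimes L_{n+1}^f M_j),\]
where $C^\bullet(G_n;-)$ denotes the usual continuous cobar cosimplicial resolution and, by Definition \ref{fun}, $\mathrm{Tot}(X^\bullet) \simeq (L_{T(n+1)}K(E_n))^{hG_n}$. The spectral sequence ${^{II}}\mspace{-2mu}E_2$ is then immediate: by construction it is the homotopy spectral sequence $\lims_\Delta \pi_t(X^\bullet)$, conditionally convergent to $\pi_{t-s}(\mathrm{Tot}(X^\bullet))$.

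For the ${^{I}}\mspace{-3mu}E_2$ spectral sequence I first fix $j$. Because $U_i$ acts trivially on $K(E_n^{hU_i}) \otimes L_{n+1}^f M_j$, the filtered colimit $\colim_i (K(E_n^{hU_i}) \otimes L_{n+1}^f M_j)$ is a discrete $G_n$-spectrum, its $\pi_t$ is a discrete $G_n$-module, and I identify
\[\lims_\Delta \pi_t(X^\bullet_j) \cong H^s_\mathrm{cts}\bigl(G_n, \pi_t(K(E_n) \otimes v_{n+1}^{-1}M_j)\bigr),\]
using that $L_{n+1}^f M_j \simeq v_{n+1}^{-1}M_j$ (because $M_j$ has type $n+1$), that $\pi_t$ commutes with filtered colimits, and that the cosimplicial level $\pi_t(X^p_j)$ becomes the group of continuous cochains $C^p_\mathrm{cts}(G_n, \pi_t(K(E_n) \otimes v_{n+1}^{-1}M_j))$.

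To assemble these for varying $j$, I apply $\pi_t$ to $X^p = \lim_j X^p_j$. At each cosimplicial level the Milnor exact sequence yields $0 \to \limone_j \pi_{t+1}(X^p_j) \to \pi_t(X^p) \to \lim_j \pi_t(X^p_j) \to 0$, and combining this with Jannsen's original description of $H^\ast_\mathrm{cont}(G_n;-)$ as the right derived functor of $\lim_j \circ (-)^{G_n}$ on the abelian category of towers of discrete $G_n$-modules \cite{Jannsen} identifies the resulting $E_2$-term with $H^s_\mathrm{cont}(G_n; \{\pi_t(K(E_n) \otimes v_{n+1}^{-1}M_j)\}_{j \geq 0})$. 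Conditional convergence follows from the cosimplicial construction of $X^\bullet$ together with the finite virtual cohomological dimension of $G_n$, which bounds horizontal extent.

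Finally, under the Mittag-Leffler hypothesis all $\limone_j$ contributions vanish, so $\pi_t(L_{T(n+1)}K(E_n)) \cong \lim_j \pi_t(K(E_n) \otimes v_{n+1}^{-1}M_j)$, and Jannsen's continuous cohomology of the tower collapses to the continuous cochain cohomology $H^s_\mathrm{cts}(G_n, \pi_\ast(L_{T(n+1)}K(E_n)))$; applying the same vanishing to the cosimplicial levels of $X^\bullet$ directly identifies ${^{II}}\mspace{-2mu}E_2^{s,t}$ with the same group. The main obstacle I expect is step two: rigorously verifying that $\pi_t$ of the filtered-colimit cobar complex is the standard continuous cochain complex of the discrete $G_n$-module $\pi_t(K(E_n) \otimes v_{n+1}^{-1}M_j)$. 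This should rest on each $K(E_n^{hU_i}) \otimes L_{n+1}^f M_j$ having a $G_n$-action factoring through the finite quotient $G_n/U_i$, which is precisely the hypothesis making Jannsen's framework applicable.
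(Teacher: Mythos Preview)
Your overall shape matches the paper's: identify $\colim_{i}(K(E_n^{hU_i})\otimes L_{n+1}^fM_j)\simeq K(E_n)\otimes v_{n+1}^{-1}M_j$ for each $j$, then feed the resulting tower of discrete $G_n$-spectra into existing machinery. The paper's proof is in fact only a few lines: it carries out exactly that identification and then cites \cite[Proposition 3.1.2]{Geisser}, \cite[Theorems 8.5, 8.8]{cts}, and \cite[Theorem 2.2]{Jannsen} for the two spectral sequences and the Mittag--Leffler statement. Your treatment of ${^{II}}\mspace{-2mu}E$ and of the Mittag--Leffler consequence is in line with what those references provide.

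The gap is in your construction of ${^{I}}\mspace{-3mu}E$. You announce that both spectral sequences arise from the single cosimplicial spectrum $X^\bullet$, but the Bousfield--Kan spectral sequence of $X^\bullet$ is ${^{II}}\mspace{-2mu}E$, full stop. Your paragraph ``To assemble these for varying $j$\ldots'' does not produce a second spectral sequence: the Milnor sequence for $\pi_t(X^p)=\pi_t(\lim_j X^p_j)$ is a statement about the $E_1$-terms of ${^{II}}\mspace{-2mu}E$, not a new filtration. In the references the paper invokes, ${^{I}}\mspace{-3mu}E$ is built from the \emph{tower} filtration on $\lim_j (Y_j^{hG_n})$ (equivalently, as a hypercohomology spectral sequence in Jannsen's abelian category of towers of discrete $G_n$-modules), and that is where the $E_2$-identification with $H^s_\mathrm{cont}(G_n;\{\pi_t(Y_j)\})$ comes from. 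If your Milnor-plus-Jannsen passage actually computed ${^{I}}\mspace{-3mu}E_2$ from $X^\bullet$, it would force ${^{I}}\mspace{-3mu}E_2\cong{^{II}}\mspace{-2mu}E_2$ unconditionally, which is exactly what the Mittag--Leffler hypothesis is there to secure and is false in general (cf.\ \cite[Section 4.6]{joint}).

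A smaller point: ``finite virtual cohomological dimension of $G_n$ bounds horizontal extent'' is not correct as stated, since $G_n$ itself may have infinite cohomological dimension when it contains $p$-torsion; only an open subgroup has finite cd. Conditional convergence in the cited references does not rely on a horizontal vanishing line for $G_n$.
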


For any spectrum $X$ with trivial $G_n$-action, there are two homotopy fixed point spectral sequences for $\pi_\ast((L_{K(n)}(E_n \wedge X))^{hG_n})$ that correspond to the two 
in Theorem \ref{spectralsequences}, but by \cite[Theorem 1.2]{jointwithTakeshi}, it is the second 
one, with its own particular ${^{II}}\mspace{-2mu}E_2^{\ast,\ast}$, that is isomorphic to the strongly convergent $K(n)$-local $E_n$-Adams 
spectral sequence for $\pi_\ast(L_{K(n)}X)$. One ingredient in the construction of this 
Adams-type spectral sequence is that $E_n$ is a commutative algebra. Similarly, 
$K(E_n)$ is a commutative algebra and it seems plausible that, in general, the second spectral sequence in Theorem \ref{spectralsequences} has better properties. Also, by \cite[Section 4.6]{joint}, it could happen that there are cases where 
${^{II}}\mspace{-2mu}E_2^{s,t}$ in Theorem \ref{spectralsequences} is equal 
to the continuous cochain cohomology group stated in the theorem, without the underlying tower of 
discrete $G_n$-modules satisfying the Mittag-Leffler condition. 

Since $F$ can be regarded as a presheaf of commutative algebras, there is 
the presheaf $L_{n+1}^f \circ K \circ F$ in $\mathrm{PSh}(\mathcal{O}_{G_n}, \mathrm{Sp})$ 
given by
\[G_n/U \mapsto L_{n+1}^fK(F(G_n/U)) = L_{n+1}^fK(E_n^{hU}), 
\ \ \ U \ \text{an open subgroup of} \ G_n,\]
which induces the diagram 
\[L_{n+1}^f K(E_n^{hU_0}) \to L_{n+1}^f K(E_n^{hU_1}) \to \cdots 
\to L_{n+1}^f K(E_n^{hU_i}) \to \cdots,\] with $G_n/U_i$ acting on $L_{n+1}^f K(E_n^{hU_i})$ for each $i$, and hence, there is the continuous homotopy fixed point 
spectrum \[(\colim_{i \geq 0} L_{n+1}^f K(E_n^{hU_i}))^{hG_n}.\] The next result, whose proof is in Section \ref{sectiondiscrete}, uses this last 
spectrum to show that $(L_{T(n+1)}K(E_n))^{hG_n}$ is $T(n+1)$-local. 

\begin{Thm}\label{discrete}
For each $n \geq 1$ and all primes $p$, there is an equivalence 
\[(L_{T(n+1)}K(E_n))^{hG_n} \simeq L_{T(n+1)}((\colim_{i \geq 0} L_{n+1}^f K(E_n^{hU_i}))^{hG_n}).\]
\end{Thm}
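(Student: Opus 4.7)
The plan is to start from the definition of $(L_{T(n+1)}K(E_n))^{hG_n}$ supplied by Definition \ref{fun} and simplify it, via smashing and dualizability manipulations, into the form $L_{T(n+1)}(Y^{hG_n})$ where $Y := \colim_{i \geq 0} L_{n+1}^f K(E_n^{hU_i})$.

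I begin by rewriting the inner smash product. Because $L_{n+1}^f$ is a smashing Bousfield localization of $\mathrm{Sp}$, there is a natural equivalence $K(E_n^{hU_i}) \otimes L_{n+1}^f M_j \simeq L_{n+1}^f K(E_n^{hU_i}) \otimes M_j$. Since the colimit in $\mathrm{Sp}$ commutes with the smash product and $M_j$ is constant in $i$, it follows that
\[\colim_{i \geq 0}(K(E_n^{hU_i}) \otimes L_{n+1}^f M_j) \simeq Y \otimes M_j.\]
Each $M_j$ is finite, hence dualizable, with trivial $G_n$-action. Working with the concrete model of the continuous homotopy fixed point spectrum in Definition \ref{fun} -- namely, the totalization of an $\{i \geq 0\}$-indexed colimit of standard cosimplicial spectra -- smashing against $M_j$ commutes termwise with the colimit and then with the totalization by dualizability. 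Consequently $(Y \otimes M_j)^{hG_n} \simeq Y^{hG_n} \otimes M_j$, and Definition \ref{fun} gives
\[(L_{T(n+1)}K(E_n))^{hG_n} \simeq \lim_{j \geq 0}(Y^{hG_n} \otimes M_j).\]

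Finally, I identify this limit with $L_{T(n+1)} Y^{hG_n}$. Each $L_{n+1}^f K(E_n^{hU_i})$ is $L_{n+1}^f$-local; filtered colimits of $L_{n+1}^f$-local spectra remain $L_{n+1}^f$-local (as $L_{n+1}^f$ is smashing), and $Y^{hG_n}$ is a homotopy limit of $L_{n+1}^f$-local objects, hence $L_{n+1}^f$-local. So $Y^{hG_n} \otimes M_j \simeq Y^{hG_n} \otimes L_{n+1}^f M_j$ by smashing once more. Applying the same LMMT-style identification that underlies Theorem \ref{gettingtheballrolling} -- namely, $\lim_{j \geq 0}(Z \otimes L_{n+1}^f M_j) \simeq L_{T(n+1)} Z$ for any $L_{n+1}^f$-local $Z$ -- I conclude
\[\lim_{j \geq 0}(Y^{hG_n} \otimes M_j) \simeq L_{T(n+1)} Y^{hG_n},\]
which is the asserted equivalence.

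The main obstacle is the commutation of continuous homotopy fixed points with smashing by the finite spectrum $M_j$: one must verify it carefully against the concrete construction of Definition \ref{fun}, tracking that smashing by $M_j$ passes through both the $i$-colimit and the cosimplicial totalization. A secondary delicate point is the final identification of $\lim_j(Y^{hG_n} \otimes L_{n+1}^f M_j)$ with $L_{T(n+1)} Y^{hG_n}$, which leans on the same deep ingredients (Hovey--Strickland together with the LMMT result) that drive the proof of Theorem \ref{gettingtheballrolling}.
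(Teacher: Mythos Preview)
Your proof is correct and follows essentially the same route as the paper: rewrite the inner smash using that $L_{n+1}^f$ is smashing, pull $M_j$ out of the colimit and then out of the continuous homotopy fixed points (the paper cites finiteness of $M_j$ together with the finite virtual cohomological dimension of $G_n$, which is what makes the totalization model in (\ref{model}) available), observe that $Y^{hG_n}$ is $L_{n+1}^f$-local, and then identify $\lim_j(Y^{hG_n}\otimes M_j)$ with $L_{T(n+1)}Y^{hG_n}$. One small correction: the final identification $\lim_{j}(Z\otimes L_{n+1}^fM_j)\simeq L_{T(n+1)}Z$ does not rely on the LMMT result at all --- it is exactly Theorem~\ref{split} (that is, $L_{T(n+1)}\simeq L_{F(n+1)}L_{n+1}^f$) combined with the Hovey--Strickland Moore-tower description of $L_{F(n+1)}$; LMMT is used only in Theorem~\ref{gettingtheballrolling} to commute $K(-)$ with the filtered colimit of the $E_n^{hU_i}$.
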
 

\subsection{Potential relationships of $(L_{T(n+1)}K(E_n))^{hG_n}$ 
with an Ausoni-Rognes conjecture involving $K(E_n)^{hG_n}$}
The spectral sequences in Theorem \ref{spectralsequences} remind 
one of an Ausoni-Rognes conjecture (\cite[(0.1)]{acta}, \cite[page 46; Remark 10.8]{jems}; also, see the closely related \cite[Conjecture 4.2]{AusoniRognesGuido}), which states that (a) the $K(n)$-local unit map $\eta \: L_{K(n)}S^0 \to E_n$ induces a map
\[K(L_{K(n)}S^0) \to K(E_n)^{hG_n},\] which -- in this paper -- we refer to as $K_\tau$ and 
whose target 
$K(E_n)^{hG_n}$ is a homotopy fixed point spectrum whose construction is compatible with the profinite topology on $G_n$; 
and (b) the map
\[K_\tau \otimes T(n+1) 
\: K(L_{K(n)}S^0) \otimes T(n+1) \to K(E_n)^{hG_n} 
\otimes T(n+1)\] is an equivalence, so that $K_\tau$ is a $T(n+1)$-equivalence. 

\begin{Rk} 
The $\tau$ in $K_\tau$ is for ``transposing," since the $K(n)$-local unit 
\[E_n^{hG_n} \xleftarrow{\,\simeq\,} L_{K(n)}S^0\] of commutative algebras is an 
equivalence, by \cite[Theorem 1]{DH}, giving 
\[K(E_n^{hG_n}) \xleftarrow{\,\simeq\,} K(L_{K(n)}S^0),\] which implies that $K_\tau$ switches 
$K(-)$ and $(-)^{hG_n}$ in the case of $E_n$. 
\end{Rk}

\begin{Rk}
Currently, for every $n$ and $p$, there is not a published construction of $K(E_n)^{hG_n}$ or 
the map $K_\tau$. In \cite[Remark 1.5]{padicspectra2}, it is noted that according to Jacob Lurie, 
the condensed mathematics 
of Dustin Clausen and Peter Scholze can be used to define 
$K(E_n)$ as a condensed spectrum, and then building on this, there is a candidate 
definition of $K(E_n)^{hG_n}$ in the condensed setting. Similarly (see [op. cit.]),  by viewing $K(E_n)$ as a 
pyknotic spectrum \cite[Section 3.1]{pyknotic}, 
there should be a pyknotic version of the ``condensed candidate" for $K(E_n)^{hG_n}$. 
\end{Rk}

\begin{Rk}
Strictly speaking, the Ausoni-Rognes conjecture referred to above predicts that $K_\tau \otimes v_{n+1}^{-1}F(n+1)$ is an equivalence, but this is logically the same as $K_\tau \otimes T(n+1)$ being an 
equivalence.
\end{Rk}

From the commutative square 
\[\xymatrix@=.5in{
K(L_{K(n)}S^0) \ar[d] \ar[r]^-{\scriptscriptstyle{K_\tau}} & K(E_n)^{hG_n} \ar[d]\\ 
L_{T(n+1)}K(L_{K(n)}S^0) \ar[r]^-{\scriptscriptstyle{L_{T(n+1)}K_\tau}} & 
L_{T(n+1)}(K(E_n)^{hG_n}),}\] 
in which the vertical maps, as the usual localizations, are 
$T(n+1)$-equivalences, we see that if part (a) of the above Ausoni-Rognes conjecture holds, 
then part (b) holds 
if and only if $L_{T(n+1)}(K_\tau)$ is an equivalence. This leads one to 
wonder about the relationship between \[L_{T(n+1)}(K(E_n)^{hG_n}) \ \text{and} \ 
(L_{T(n+1)}K(E_n))^{hG_n}\] and if there is an equivalence between 
\[(L_{T(n+1)}K(E_n))^{hG_n} \ \text{and} \ L_{T(n+1)}K(L_{K(n)}S^0).\] 

The following result, whose proof is in Section \ref{strategy}, gives two 
hypotheses that when jointly satisfied imply that 
the last two spectra above are equivalent. If a finite group 
$H$ acts on a spectrum $Z$, we let $\prod_{H^\bullet} Z$ denote the induced cosimplicial spectrum; see Section \ref{ctshfps} for more detail. Also, for each $i \geq 0$, we let 
\[K(\eta \scriptstyle{\nearrow} \mspace{1.5mu}\displaystyle{}i) \: L_{T(n+1)}K(E_n^{hG_n}) \to (L_{T(n+1)}K(E_n^{hU_i}))^{hG_n/U_i}\] be the natural map.

\begin{Thm}\label{wouldgiveprogress}
Let $n \geq 1$ and let $p$ be a prime. If
\begin{itemize}
\item[{(H1)}]
%[{\raisebox{.11pt}{\textcircled{\raisebox{-.4pt} {\footnotesize{A}}}}}]
the map
\[\colim_{i \geq 0} K(\eta \scriptstyle{\nearrow} \mspace{1.5mu}\displaystyle{}i) \: 
L_{T(n+1)}K(E_n^{hG_n}) \to \colim_{i \geq 0} (L_{T(n+1)}K(E_n^{hU_i}))^{hG_n/U_i}\] 
is a $T(n+1)$-equivalence, and 
\item[{(H2)}]
%[{\raisebox{.11pt}{\textcircled{\raisebox{-.4pt} {\footnotesize{B}}}}}]
for each $j \geq 0$, the canonical map
\[\colim_{i \geq 0} \displaystyle{} \mathrm{Tot}(C(n,j, i)^\bullet)
\to \mathrm{Tot}(\colim_{i \geq 0} C(n,j,i)^\bullet)\] is an equivalence, 
where for each $i$, 
\[C(n,j,i)^\bullet := \textstyle{\prod}_{(G_n/U_i)^\bullet} (K(E_n^{hU_i}) \otimes L_{n+1}^fM_j),\] 
\end{itemize} 
then there is an equivalence 
\[(L_{T(n+1)}K(E_n))^{hG_n} \simeq L_{T(n+1)}K(L_{K(n)}S^0).\] 
\end{Thm}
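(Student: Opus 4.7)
The plan is to unwind Definition \ref{fun}, use (H2) to commute $\colim_i$ past $\mathrm{Tot}$, then massage the resulting expression so that (H1) can be applied, and finally invoke the Devinatz--Hopkins equivalence $E_n^{hG_n} \simeq L_{K(n)}S^0$ from \cite[Theorem 1]{DH}.

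First, by Definition \ref{fun} and the description of continuous homotopy fixed points as the totalization of a $\{i \geq 0\}$-indexed colimit of cosimplicial spectra (Section \ref{ctshfps}), one has
\[(L_{T(n+1)}K(E_n))^{hG_n} = \lim_j \mathrm{Tot}\bigl(\colim_i C(n,j,i)^\bullet\bigr).\]
Applying hypothesis (H2) for each $j$, and identifying each $\mathrm{Tot}(C(n,j,i)^\bullet)$ with the ordinary homotopy fixed point spectrum $(K(E_n^{hU_i}) \otimes L_{n+1}^f M_j)^{hG_n/U_i}$ for the finite quotient $G_n/U_i$, yields
\[(L_{T(n+1)}K(E_n))^{hG_n} \simeq \lim_j \colim_i (K(E_n^{hU_i}) \otimes L_{n+1}^f M_j)^{hG_n/U_i}.\]

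Next I would pull the smash factor $L_{n+1}^f M_j$ past the finite-group fixed points and the filtered colimit. Since $M_j$ is finite (hence dualizable) with trivial $G_n/U_i$-action, $L_{n+1}^f$ is smashing, and $L_{n+1}^f M_j \simeq v_{n+1}^{-1} M_j$ is $T(n+1)$-local (because $M_j$ has type $n+1$), standard manipulations yield
\[\colim_i (K(E_n^{hU_i}) \otimes L_{n+1}^f M_j)^{hG_n/U_i} \simeq \bigl(\colim_i (L_{T(n+1)}K(E_n^{hU_i}))^{hG_n/U_i}\bigr) \otimes L_{n+1}^f M_j,\]
where the $L_{T(n+1)}$ appears inside the colimit because smashing with the $T(n+1)$-local spectrum $L_{n+1}^f M_j$ turns the unit $K(E_n^{hU_i}) \to L_{T(n+1)}K(E_n^{hU_i})$ into an equivalence. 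Hypothesis (H1) now identifies $\colim_i (L_{T(n+1)}K(E_n^{hU_i}))^{hG_n/U_i}$ with $L_{T(n+1)}K(E_n^{hG_n})$ up to $T(n+1)$-equivalence, and the same smash factor $L_{n+1}^f M_j$ upgrades this to a genuine equivalence. Taking $\lim_j$ collapses $\lim_j(L_{T(n+1)}K(E_n^{hG_n}) \otimes L_{n+1}^f M_j)$ back to $L_{T(n+1)}K(E_n^{hG_n})$ via the standard identity $L_{T(n+1)}Y \simeq \lim_j (Y \otimes L_{n+1}^f M_j)$ and idempotence of $L_{T(n+1)}$. Since $E_n^{hG_n} \simeq L_{K(n)}S^0$ by \cite[Theorem 1]{DH}, this agrees with $L_{T(n+1)}K(L_{K(n)}S^0)$, as desired.

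Each hypothesis plays one focused role: (H2) enables the initial swap of $\colim_i$ with $\mathrm{Tot}$, and (H1) yields the final identification of the colimit of fixed-point spectra with $L_{T(n+1)}K(E_n^{hG_n})$. I expect the main obstacle to be the middle paragraph, in which the factor $L_{n+1}^f M_j$ must be moved past finite-group fixed points, past the filtered colimit, and through an implicit $T(n+1)$-localization. The needed ingredients -- dualizability of $M_j$, smashing of $L_{n+1}^f$, $T(n+1)$-locality of $L_{n+1}^f M_j$ arising from $M_j$ being of type $n+1$, and the compatibility of $L_{n+1}^f K(-)$ with $L_{T(n+1)}K(-)$ on the spectra at hand -- are each standard in spirit, but assembling them into a clean chain of equivalences without conflating $L_{n+1}^f$ with $L_{T(n+1)}$ will require careful bookkeeping.
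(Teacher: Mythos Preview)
Your proposal is correct and follows essentially the same route as the paper: unwind Definition~\ref{fun}, apply (H2) to swap $\colim_i$ with $\mathrm{Tot}$, commute $(-)^{hG_n/U_i}$ with $(-)\otimes L_{n+1}^f M_j$, apply (H1) after smashing with $L_{n+1}^f M_j$ to reach $L_{T(n+1)}K(E_n^{hG_n})\otimes L_{n+1}^f M_j$, and then reassemble via Theorem~\ref{split} and the Devinatz--Hopkins equivalence. The one substantive difference lies in the commutation step you flag as the main obstacle. You propose writing $L_{n+1}^f M_j \simeq L_{n+1}^f S^0 \otimes M_j$, absorbing the $L_{n+1}^f S^0$ factor into the $T(n+1)$-local (hence $L_{n+1}^f$-local) spectra on either side of the fixed points, and then commuting only the finite $M_j$ past $(-)^{hG_n/U_i}$; this works and is entirely elementary. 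The paper instead invokes Kuhn's telescopic Tate vanishing, $Z^{hH} \simeq L_{T(n+1)}(Z_{hH})$ for $T(n+1)$-local $Z$ and finite $H$, routing through homotopy orbits where commutation with $(-)\otimes L_{n+1}^f M_j$ is immediate. Your argument avoids the external citation to Kuhn; the paper's is a one-liner once that result is on the table.
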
 

When $n=1$ and $p=2$, Remark \ref{commentone} shows that (H1) is true, but the validity of 
(H2) is still open. For pairs $(n,p) \neq (1,2)$, neither (H1) nor (H2) is 
known to be true and below we give some 
considerations related to this.  

\begin{Rk}\label{commentone}
The map $K(\eta \scriptstyle{\nearrow} \mspace{1.5mu}\displaystyle{}0)$ is an equivalence, 
and to show that (H1) holds, it suffices to show that for a cofinal subsequence $i_0, i_1, ..., i_l, ...$ of $\{i \geq 0\}$, each map $K(\eta \scriptstyle{\nearrow} \mspace{1.5mu}\displaystyle{}i_l)$ is an 
equivalence. For each $i$, the canonical map $E_n^{hG_n} \to E_n^{hU_i}$ is a $K(n)$-local $G_n/U_i$-Galois 
extension \cite[Theorem 5.4.4, (c)]{Rognes}, so that by, for example, (\ref{chromaticmorava}), this map is also a $T(n)$-local $G_n/U_i$-Galois extension, as noted in \cite[Section 4.3]{CMNN}. By 
[op. cit., Corollary 4.16], if $G_n/U_i$ is a $p$-group, then $K(\eta \scriptstyle{\nearrow} \mspace{1.5mu}\displaystyle{}i)$ is an equivalence. 
When $(n,p) = (1,2)$, $G_1 \cong \mathbb{Z}_2^\times \cong \mathbb{Z}_2 \times 
\mathbb{Z}/(2)$ is a pro-$2$-group, with $\mathbb{Z}_2$ equal to the $2$-adic integers, 
and hence, (H1) holds. 
\end{Rk} 

It is a special case of \cite[Conjecture 4.2]{AusoniRognesGuido}, due to Ausoni and Rognes, that for all $n$, $p$, and $i$, the canonical map 
\[L_{T(n+1)}K(E_n^{hG_n}) \to L_{T(n+1)}(K(E_n^{hU_i})^{hG_n/U_i})\] is an equivalence. 
Though ``Conjecture 4.2" is in general still open, this conjecture, results in \cite[Sections 1, 4]{CMNN}, especially [op. cit., Corollary 4.16] -- which was 
used in Remark \ref{commentone}, and \cite[Theorems 1.3, 1.8, 1.10, 5.1, 5.6]{ClausenEtAl}, which include verifying special cases of 
``Conjecture 4.2," give momentum for perhaps 
validating (H1) in every case. 

%\begin{Rk}
To underline the plausibility of (H1) in general, we briefly highlight 
\cite[Example 4.17]{CMNN} from the progress cited above. Let 
$E$ be a Lubin-Tate theory with extended Morava stabilizer group 
$G_n^\mathrm{ex} \cong {S}_n \rtimes \widehat{\mathbb{Z}}$, where $S_n$ is the Morava stabilizer group and $\widehat{\mathbb{Z}}$ is the 
profinite completion of the integers, and for $K$ a closed subgroup of $G_n^\mathrm{ex}$, let 
$E^{hK}$ denote the continuous homotopy fixed points. As explained in \cite[Sections 5.1, 5.2]{bldgs}, the construction of $E^{hK}$ uses \cite{DH}. Now let ${U}$ be an open subgroup of 
$G_n^\mathrm{ex}$ such that $U \cap S_n$ is pro-$p$. 
By \cite[Corollary 4.16]{CMNN} and \cite{ClausenEtAl}, given any 
normal inclusion $V' \vartriangleleft V \subset U$ of open subgroups, the canonical map
\[L_{T(n+1)}K(E^{hV}) \xrightarrow{\,\simeq\,} (L_{T(n+1)}K(E^{hV'}))^{hV/V'}\] is an equivalence, and this yields a sheaf of $T(n+1)$-local spectra on the site $\mathcal{T}_{{U}}$. 

As explained in Definition \ref{iv}, (H2) holds if for every $j \geq 0$, the presheaf $\mathcal{P}_j$ of Section \ref{oneone} satisfies ``condition (iv)" 
with $\mathcal{N} = \{U_i \mid i \geq 0\}$ (``condition (iv)" is based on 
\cite{ClausenMathewInventiones}). Related 
to this is the familiar problem of showing that a filtered colimit of homotopy spectral sequences 
has abutment equal to the colimit of the abutments of those spectral sequences (for 
example, see \cite[Section 3.1.3]{Mitchell}). Given any $j$, for each $i$ 
there is the homotopy fixed point spectral 
sequence $\{\,{^i_{\mspace{-3mu}j}}\mspace{-2mu}E_r^{\ast,\ast}\}_{r \geq 1}$ that has 
the form
\[{^i_{\mspace{-3mu}j}}\mspace{-2mu}E_2^{s,t} \mspace{-2mu} = \mspace{-1mu} H^s(G_n/U_i, \pi_t(K(E_n^{hU_i}) \otimes L_{n+1}^fM_j)) 
\Rightarrow \pi_{t-s}\bigl((K(E_n^{hU_i}) \otimes L_{n+1}^fM_j)^{hG_n/U_i}\bigr).\] Then (H2) is valid if for each $j$, there is some $r \geq 2$ and some integer 
$s'$ such that
\[{^i_{\mspace{-3mu}j}}\mspace{-2mu}E_r^{s,t} = 0, \ \text{for all} \ i \geq 0, \, s > s', \, t \in \mathbb{Z}.\]

\subsection{Possible connections with the Ausoni-Rognes conjecture without using towers} 
The next result is an immediate consequence of the following 
definition (and, for example, \cite[Theorem 3.2.1]{joint} and \cite[Theorem 7.9]{cts}).

\begin{Def}\label{defwithouttower}
Let $n \geq 1$ and set $p$ equal to any prime. Recall that $T(n+1)$ denotes any choice of 
a representative from the Bousfield class of $v_{n+1}^{-1}F(n+1)$, where $F(n+1)$ is any finite type $n+1$ complex.
Since
\[K(E_n) \otimes T(n+1)
\simeq \colim_{i \geq 0} (K(E_n^{hU_i})\otimes T(n+1)),\] where for each $i$, the copy of 
$T(n+1)$ is equipped with the trivial $G_n/U_i$-action, it is natural to define the 
continuous homotopy fixed point spectrum
\[(K(E_n) \otimes T(n+1))^{hG_n} := \bigl(\colim_{i \geq 0} (K(E_n^{hU_i})\otimes T(n+1))\bigr)^{hG_n},\] 
which, as in Definition \ref{fun}, is the global sections of a Postnikov sheafification. 
\end{Def}

\begin{Thm}\label{notower}
For each $n \geq 1$ and a prime $p$, there is a conditionally convergent 
homotopy fixed point spectral sequence
\[E_2^{s,t} = H^s_c(G_n, \pi_t(K(E_n) \otimes T(n+1))) \Longrightarrow 
\pi_{t-s}((K(E_n) \otimes T(n+1))^{hG_n}),\] where 
$\pi_t(K(E_n) \otimes T(n+1))$ is a discrete $G_n$-module, for each $t \in \mathbb{Z}$.
\end{Thm}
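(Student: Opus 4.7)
The plan is to treat the theorem as a direct application of the continuous homotopy fixed point machinery in Section \ref{ctshfps}, specialized to the diagram $\{X_i\}_{i \geq 0}$ with $X_i := K(E_n^{hU_i}) \otimes T(n+1)$, where each copy of $T(n+1)$ carries the trivial action. First I would observe that this diagram falls exactly into the framework recalled after Theorem \ref{gettingtheballrolling}: $G_n$ acts on $X_i$ through the finite quotient $G_n/U_i$, so the general construction produces the cosimplicial spectrum
\[
C^\bullet := \colim_{i \geq 0} \textstyle{\prod}_{(G_n/U_i)^\bullet} X_i,
\]
whose totalization is precisely $(\colim_{i \geq 0} X_i)^{hG_n} = (K(E_n) \otimes T(n+1))^{hG_n}$ by Definition \ref{defwithouttower}.

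Next I would invoke the standard conditionally convergent homotopy spectral sequence of the cosimplicial spectrum $C^\bullet$,
\[
E_2^{s,t} = \lims_\Delta \pi_t(C^\bullet) \Longrightarrow \pi_{t-s}(\mathrm{Tot}(C^\bullet)),
\]
so that the abutment is the desired $\pi_{t-s}((K(E_n) \otimes T(n+1))^{hG_n})$. The remaining content is purely algebraic: identify the $E_2$-page with continuous cohomology. Since homotopy groups commute with filtered colimits of spectra, for each $t$,
\[
\pi_t(K(E_n) \otimes T(n+1)) \cong \colim_{i \geq 0} \pi_t(K(E_n^{hU_i}) \otimes T(n+1)),
\]
a filtered colimit of $G_n/U_i$-modules, and therefore a discrete $G_n$-module in the usual sense. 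This justifies the discreteness claim in the theorem.

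Finally I would apply \cite[Theorem 3.2.1]{joint} and \cite[Theorem 7.9]{cts}, which together identify $\lims_\Delta$ of the levelwise-homotopy of the colimit cobar construction with Jannsen-style continuous cohomology of the resulting discrete $G_n$-module. This yields the identification
\[
E_2^{s,t} \cong H^s_c\bigl(G_n, \pi_t(K(E_n) \otimes T(n+1))\bigr),
\]
as required, completing the proof.

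The one place where care is genuinely needed — and therefore what I would call the main point rather than an obstacle — is checking that the cited colimit-and-cobar identification actually applies to the present diagram. The setup in \cite{joint, cts} is formulated for towers of discrete $G_n$-spectra with $U_i$ acting trivially on $X_i$, which is precisely what we have here (since $U_i$ already acts trivially on $E_n^{hU_i}$ and acts trivially by construction on $T(n+1)$), so the verification reduces to matching notation. No Mittag-Leffler or $\limone$ issue arises, because the definition of continuous cohomology used on the $E_2$-page is already the Jannsen-style version that absorbs the derived-limit contributions from the tower $\{\pi_t(X_i)\}_{i \geq 0}$, and conditional convergence is automatic from the cosimplicial spectral sequence formalism.
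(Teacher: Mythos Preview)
Your approach is correct and matches the paper's, which simply records that the theorem is an immediate consequence of Definition \ref{defwithouttower} together with \cite[Theorem 3.2.1]{joint} and \cite[Theorem 7.9]{cts}. One minor correction to your final paragraph: there is no tower and no Jannsen-style cohomology in this theorem --- $\{\pi_t(X_i)\}_{i \geq 0}$ is a \emph{direct} system whose colimit is a discrete $G_n$-module, and $H^s_c$ here is ordinary continuous cohomology of $G_n$ with discrete coefficients, so the absence of Mittag-Leffler or $\limone$ issues is for the simpler reason that no inverse limit appears at all.
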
 

By 
\cite[page 46; Remark 10.8]{jems}, the Ausoni-Rognes conjecture 
predicts that there is a homotopy fixed point spectral sequence
\begin{equation}\label{conjecturedhfps}\zig
H^s_c(G_n, \pi_t(K(E_n) \otimes v_{n+1}^{-1}F(n+1))) \Longrightarrow 
\pi_{t-s}(K(L_{K(n)}S^0) \otimes v_{n+1}^{-1}F(n+1)),\end{equation} and thus, 
it is natural to ask the following questions:
\begin{itemize}
\item
When $T(n+1) := v_{n+1}^{-1}F(n+1)$, the spectral sequence in Theorem \ref{notower} and 
conjectural spectral sequence (\ref{conjecturedhfps}) have identical 
$E_2$-terms. Is the former spectral sequence a 
realization of the latter one? 
\item 
In general, is there an equivalence between 
\[K(L_{K(n)}S^0) \otimes T(n+1) \ \text{and} \ (K(E_n) \otimes T(n+1))^{hG_n}?\] 
\item 
What is the relationship between 
\[(K(E_n) \otimes T(n+1))^{hG_n} \ \text{and} \ K(E_n)^{hG_n} \otimes T(n+1)?\] 
\end{itemize} 
The following result gives two conditions that imply cases in which the answers 
to the first and second questions above are ``yes." 

\begin{Thm}\label{moreprogressmaybe}
Let $n \geq 1$, let $p$ be a prime, and set $T(n+1)$ equal to $v_{n+1}^{-1}F(n+1)$, where 
$F(n+1)$ is an atomic $\mu$-spectrum. If \begin{itemize} 
\item[{$\mathrm{(H1}^{\prime}\mathrm{)}$}] the map $\displaystyle{\colim_{i \geq 0} K(\eta} \scriptstyle{\nearrow} \mspace{1.5mu}\displaystyle{}i)$ is a $T(n+1)$-equivalence, 
and 
\item[{$\mathrm{(H2}^{\prime}\mathrm{)}$}]
%[{\raisebox{.11pt}{\textcircled{\raisebox{-.4pt} {\footnotesize{B}}}}}]
the canonical map $\displaystyle{\colim_{i \geq 0} \mathrm{Tot}(C(n, i)^\bullet)
\to \mathrm{Tot}(\colim_{i \geq 0} C(n,i)^\bullet)}$ is an equivalence, 
where for each $i$, $C(n,i)^\bullet := \textstyle{\prod}_{(G_n/U_i)^\bullet} (K(E_n^{hU_i}) \otimes T(n+1))$,
\end{itemize} 
then there is an equivalence
\[(K(E_n) \otimes T(n+1))^{hG_n} \simeq K(L_{K(n)}S^0) \otimes T(n+1).\] 
\end{Thm}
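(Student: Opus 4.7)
The plan is to mirror the proof of Theorem \ref{wouldgiveprogress}, but in simpler form because no $j$-tower appears. I would unravel the definition, swap $\mathrm{Tot}$ with a filtered colimit via (H2$'$), pull $T(n+1)$ out of each finite-group homotopy fixed point using the dualizability of $F(n+1)$, and finally apply (H1$'$) together with the Devinatz--Hopkins equivalence $E_n^{hG_n} \simeq L_{K(n)}S^0$ from \cite{DH}.

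By Definition \ref{defwithouttower} and the cosimplicial construction of continuous homotopy fixed points recalled in Section \ref{ctshfps},
\[(K(E_n) \otimes T(n+1))^{hG_n} = \mathrm{Tot}\bigl(\colim_{i \geq 0} C(n,i)^\bullet\bigr).\]
Hypothesis (H2$'$) interchanges $\mathrm{Tot}$ with the filtered colimit, giving
\[(K(E_n) \otimes T(n+1))^{hG_n} \simeq \colim_{i \geq 0} \bigl(K(E_n^{hU_i}) \otimes T(n+1)\bigr)^{hG_n/U_i}.\]
For each $i$, I would next show that the natural map
\[\bigl(L_{T(n+1)}K(E_n^{hU_i})\bigr)^{hG_n/U_i} \otimes T(n+1) \xrightarrow{\,\simeq\,} \bigl(K(E_n^{hU_i}) \otimes T(n+1)\bigr)^{hG_n/U_i}\]
is an equivalence. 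The ingredients are: (i) $F(n+1)$ is finite (since it is an atomic $\mu$-spectrum), hence dualizable, so smashing with $F(n+1)$ commutes with the finite-group $(-)^{hG_n/U_i}$; (ii) $T(n+1) = v_{n+1}^{-1}F(n+1)$ is the sequential colimit of $F(n+1)$ along its $v_{n+1}$-self-map; and (iii) the fiber of $K(E_n^{hU_i}) \to L_{T(n+1)}K(E_n^{hU_i})$ is $T(n+1)$-acyclic and so, after the same dualizable-plus-colimit manipulation, contributes trivially.

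Combining with (H1$'$) and the fact that smashing with $T(n+1)$ converts $T(n+1)$-equivalences into equivalences (and in particular absorbs $L_{T(n+1)}$), I obtain
\[\colim_{i \geq 0} \bigl(L_{T(n+1)}K(E_n^{hU_i})\bigr)^{hG_n/U_i} \otimes T(n+1) \simeq L_{T(n+1)}K(E_n^{hG_n}) \otimes T(n+1) \simeq K(L_{K(n)}S^0) \otimes T(n+1),\]
yielding the theorem. The main obstacle is the intermediate equivalence: justifying that, for each $i$, finite-group homotopy fixed points for $G_n/U_i$ commute with the sequential colimit defining $T(n+1)$. In general, $(-)^{hG}$ for a finite group $G$ does not commute with filtered colimits of spectra, and $K(E_n^{hU_i})$ is not connective, so standard convergence arguments do not apply off the shelf. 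I expect to handle this via a descent argument in the $T(n+1)$-local category along the lines of \cite{CMNN}, exploiting that each finite stage of the $v_{n+1}$-telescope is dualizable; once this commutation is established, the rest is essentially formal given (H1$'$) and (H2$'$).
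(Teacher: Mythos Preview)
Your overall architecture matches the paper exactly: unwind the definition via (\ref{model}), apply $\mathrm{(H2'})$ to swap $\mathrm{Tot}$ with the filtered colimit, identify each term with $(L_{T(n+1)}K(E_n^{hU_i}))^{hG_n/U_i}\otimes T(n+1)$, and finish with $\mathrm{(H1'})$. The difference is entirely in how you justify the intermediate equivalence
\[
(K(E_n^{hU_i}) \otimes T(n+1))^{hG_n/U_i} \simeq (L_{T(n+1)}K(E_n^{hU_i}))^{hG_n/U_i} \otimes T(n+1),
\]
and here your proposal has the gap you yourself flag: commuting $(-)^{hG_n/U_i}$ with the sequential colimit building $T(n+1)$ out of copies of $F(n+1)$. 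Dualizability of $F(n+1)$ lets you pull each finite stage through the fixed points, but there is no general reason for the resulting tower of fixed points to have colimit equal to the fixed points of the colimit, and your gesture toward a descent argument from \cite{CMNN} is not a proof.

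The paper avoids this commutation problem entirely by invoking Kuhn's telescopic Tate vanishing \cite{KuhnInventiones}: for any $T(n+1)$-local spectrum $Z$ with an action of a finite group $H$, the norm gives $Z^{hH}\simeq L_{T(n+1)}(Z_{hH})$. Since $F(n+1)$ is an atomic $\mu$-spectrum, Remark~\ref{ring2} makes $K(E_n^{hU_i})\otimes T(n+1)$ and $(L_{T(n+1)}K(E_n^{hU_i}))\otimes T(n+1)$ both $T(n+1)$-local, so both homotopy fixed points can be rewritten as $T(n+1)$-localized homotopy \emph{orbits}. Homotopy orbits are a colimit and therefore commute with $(-)\otimes T(n+1)$ on the nose; the intermediate equivalence then drops out from the chain displayed in the proof of Theorem~\ref{wouldgiveprogress} with $T(n+1)$ in place of $L_{n+1}^fM_j$. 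That single input---replacing fixed points by orbits via Tate vanishing---is the missing idea in your proposal and makes the step you were worried about completely formal.
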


The proof of Theorem \ref{moreprogressmaybe} is in Section \ref{strategy}. When $(n,p) = 
(1,2)$, assumption $\mathrm{(H1}^{\prime}\mathrm{)}$ is true, by Remark \ref{commentone}, but 
for all other pairs $(n,p)$, neither {$\mathrm{(H1}^{\prime}\mathrm{)}$} nor {$\mathrm{(H2}^{\prime}\mathrm{)}$} is known to hold. 

The next result, whose proof is in Section \ref{sectiondiscrete}, has as a consequence that 
if $F(n+1)$ is chosen to be an atomic $\mu$-spectrum, then (by Remark \ref{ring2}) with 
$T(n+1)$ set equal to $v_{n+1}^{-1}F(n+1)$, 
$(K(E_n) \otimes T(n+1))^{hG_n}$ is $T(n+1)$-local. 

\begin{Thm}\label{discretetwo}
When $n \geq 1$, $p$ is a prime, and $T(n+1) := v_{n+1}^{-1}F(n+1)$, where $F(n+1)$ is 
any finite type $n+1$ complex, there are equivalences 
\begin{align*}
(K(E_n) \otimes T(n+1))^{hG_n} & \simeq 
(\colim_{i \geq 0} L_{n+1}^f K(E_n^{hU_i}))^{hG_n}\otimes F(n+1)\\
& \simeq (\colim_{i \geq 0} L_{n+1}^f K(E_n^{hU_i}))^{hG_n}\otimes T(n+1)\\
& \simeq (L_{T(n+1)}K(E_n))^{hG_n} \otimes T(n+1).\end{align*}
\end{Thm}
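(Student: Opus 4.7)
The plan is to prove the three stated equivalences in order. The main content lies in the first equivalence, which reduces to a per-index calculation followed by a formal manipulation of continuous homotopy fixed points. The second equivalence is a structural consequence of $L_{n+1}^f$-locality, and the third is immediate from Theorem~\ref{discrete} together with the smashing property of $L_{T(n+1)}$.

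For the first equivalence, I first establish that for each $i \geq 0$ there is a natural equivalence
\[K(E_n^{hU_i}) \otimes T(n+1) \;\simeq\; L_{n+1}^f K(E_n^{hU_i}) \otimes F(n+1),\]
obtained by comparing both sides to $L_{n+1}^f K(E_n^{hU_i}) \otimes T(n+1)$. The fiber of the $L_{n+1}^f$-localization map is $L_{n+1}^f$-acyclic and hence lies in the localizing ideal generated by a finite type $n+2$ complex $F$. The smash product $F \otimes F(n+1)$ has vanishing $K(m)$-homology for every $m$, so by the nilpotence theorem a sufficiently large power of any $v_{n+1}$-self-map $v$ on $F(n+1)$ is nullhomotopic after smashing with $F$; consequently $F \otimes T(n+1) = \ast$ and $T(n+1)$ annihilates the fiber. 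In the other direction, the cofiber $F(n+1)/v$ is a finite type $\geq n+2$ spectrum, and dualizability together with $L_{n+1}^f$-locality forces $L_{n+1}^f K(E_n^{hU_i}) \otimes F(n+1)/v = \ast$, so $v$ becomes invertible on $L_{n+1}^f K(E_n^{hU_i}) \otimes F(n+1)$. Passing to $\colim_i$ (which commutes with smashing with $F(n+1)$) and then applying $(-)^{hG_n}$ delivers the first equivalence, provided the finite spectrum $F(n+1)$, which carries the trivial $G_n$-action, can be pulled across the continuous homotopy fixed point construction of Definition~\ref{fun}.

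Setting $W := (\colim_{i \geq 0} L_{n+1}^f K(E_n^{hU_i}))^{hG_n}$, the second equivalence reduces to $W \otimes F(n+1) \simeq W \otimes T(n+1)$. Since each $L_{n+1}^f K(E_n^{hU_i})$ is $L_{n+1}^f$-local and $L_{n+1}^f$ is smashing, the filtered colimit remains $L_{n+1}^f$-local, and because limits preserve $L_{n+1}^f$-locality the spectrum $W$ is itself $L_{n+1}^f$-local. The cofiber argument from the previous paragraph then applies verbatim with $W$ in place of $L_{n+1}^f K(E_n^{hU_i})$, yielding the required equivalence. The third equivalence is essentially free: from Theorem~\ref{discrete}, $(L_{T(n+1)}K(E_n))^{hG_n} \simeq L_{T(n+1)}(W)$, and smashing with $T(n+1)$, combined with the smashing property $L_{T(n+1)}(Z) \otimes T(n+1) \simeq Z \otimes T(n+1)$, produces $(L_{T(n+1)}K(E_n))^{hG_n} \otimes T(n+1) \simeq W \otimes T(n+1)$.

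The main obstacle I anticipate is the bookkeeping required to pull $F(n+1)$ across the continuous $(-)^{hG_n}$ construction. By Section~\ref{ctshfps} this functor is a totalization of an $i$-indexed colimit of cosimplicial spectra whose cosimplicial levels are finite products indexed by $(G_n/U_i)^\bullet$, so the commutation with $-\otimes F(n+1)$ must be verified separately for the finite product, for the colimit in $i$, and for the totalization. Each individual interchange is formal because $F(n+1)$ is finite and hence dualizable, so $-\otimes F(n+1)$ preserves both limits and colimits, and because the $G_n$-action on $F(n+1)$ is trivial; assembling them cleanly within the precise model of continuous homotopy fixed points adopted in Section~\ref{ctshfps} is the step that will require the most care.
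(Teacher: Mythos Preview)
Your proposal is correct and follows the same overall architecture as the paper's proof: unwind the continuous homotopy fixed points via the model in Section~\ref{ctshfps}, pull the finite spectrum $F(n+1)$ across the Tot/colimit/finite-product, then use the $L_{n+1}^f$-locality of $W$ together with Theorem~\ref{discrete}. The one place you work harder than necessary is the per-index equivalence $K(E_n^{hU_i}) \otimes T(n+1) \simeq L_{n+1}^f K(E_n^{hU_i}) \otimes F(n+1)$: instead of the nilpotence theorem and a cofiber argument, the paper simply uses Remark~\ref{telescope} to identify $T(n+1) = v_{n+1}^{-1}F(n+1) \simeq L_{n+1}^f F(n+1)$ and then moves $L_{n+1}^f$ across the tensor using that $L_{n+1}^f$ is smashing, which gives the result in one line; the same trick (applied with $W$ already shown to be $L_{n+1}^f$-local) handles the second equivalence. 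One small terminological caution: in your third step, $L_{T(n+1)}$ is not a smashing localization, but the equivalence $L_{T(n+1)}(Z) \otimes T(n+1) \simeq Z \otimes T(n+1)$ you use is still valid simply because $Z \to L_{T(n+1)}Z$ is a $T(n+1)$-equivalence.
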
 

Let $n=1$, $p \geq 5$, and let $V(1)$ denote the type $2$ Smith-Toda 
complex $S^0/(p,v_1)$. In \cite{padicspectra2}, we 
constructed in the setting of symmetric spectra 
of simplicial sets the continuous homotopy fixed 
points $(K(E_1) \wedge v_2^{-1}V(1))^{h\mathbb{Z}_p^\times}$. In symmetric spectra, there should be a zigzag of weak equivalences 
between this $(K(E_1) \wedge v_2^{-1}V(1))^{h\mathbb{Z}_p^\times}$ and 
$\bigl(\colim_{i \geq 0} (K(E_1^{hU_i}) \wedge v_2^{-1}V(1))\bigr)^{hG_1}$ -- the model for the 
continuous homotopy fixed points of $K(E_1) \wedge v_2^{-1}V(1)$ 
constructed in Definition \ref{defwithouttower}, but we have not completed our work on this zigzag. 
In \cite{padicspectra2}, we also obtained a homotopy fixed point spectral sequence for 
$(K(E_1) \wedge v_2^{-1}V(1))^{h\mathbb{Z}_p^\times}$ with $E_2$-term equal to the 
$E_2$-term of the spectral sequence given by 
Theorem \ref{notower} (with $T(2)$ there set equal to $v_2^{-1}V(1)$) and these two spectral sequences 
should be isomorphic, but our work on this is incomplete, since it is closely related to 
the aforementioned zigzag. 

\subsection*{Acknowledgements} 
I thank Martin Frankland for a discussion about $L_{T(n)}$, Niko Naumann for introducing me to a version of ``condition (iv)" in Definition \ref{iv}, Akhil Mathew and 
John Rognes for helpful interactions, and Philip Hackney and 
Justin Lynd for stimulating conversations related to working with $\infty$-categories. 

\section{Some basic facts about $T(n)$-localization}
As in the introduction, $n \geq 1$, $p$ is any prime, and $F(n)$ is a 
finite type $n$ complex. Also, following \cite[Proposition 4.22]{HS}, we let
\[M_0 \leftarrow M_{1} \leftarrow \cdots \leftarrow M_{j} \leftarrow \cdots\] be a tower of generalized 
Moore spectra, with each $M_{j}$ finite of type $n$ (here, we have type $n$, not type $n+1$, as in the introduction) and an atomic $\mu$-spectrum. 
As recalled in Section \ref{oneone}, one feature of this tower is that 
for any $Z \in \mathrm{Sp}$,
\[L_{F(n)} Z \simeq \lim_{j \geq 0} (Z \otimes M_{j}).\] 
%Let $F$ be any finite type $n+1$ spectrum and let $\{M_I\}_I$ denote a 
%tower 
% (thus,  = \{I\}$) 
%of finite type $n+1$-spectra such that Similarly, let $F'$ by any finite 
%type $n$ spectrum.
Given a finite type $0$ 
complex $F(0)$, we let $T(0) := v_0^{-1}F(0)$ denote the telescope of a $v_0$-self-map on 
$F(0)$. Thus, $T(0)$ and $H\mathbb{Q}$ have the same Bousfield class.  

\begin{Rk}\label{ring}
Suppose that $F(n)$ is an atomic $\mu$-spectrum. Then each of $F(n)$ and, by \cite[proof of Lemma 2.2]{Mahowald/Sadofsky}, the telescope 
$v_n^{-1}F(n)$ is a ``ring spectrum," in the sense of 
\cite{Mahowald/Sadofsky} (see also \cite{Devinatzrings}). Here, by ``ring spectrum," we mean 
a left-unital magma in the homotopy category of spectra (that need not be associative or right-unital). It follows that $T(n)$ can be taken to 
be a ``ring spectrum" in the above sense, and thus, it is worth highlighting the fact that \cite[proof of Lemma 2.3]{LMMT} proves the much stronger result that $T(n)$ can be set equal to an algebra in $\mathrm{Sp}$. 
\end{Rk}

\begin{Rk}\label{ring2}
Again, let $F(n)$ be an atomic $\mu$-spectrum, so that as in Remark \ref{ring}, $v_n^{-1}F(n)$ is a ``ring spectrum." Then if $Z$ is any spectrum, $Z \otimes v_n^{-1}F(n)$ is $T(n)$-local. To verify this, it 
suffices to show that the equivalent spectrum $v_n^{-1}F(n) \otimes Z$ is $v_n^{-1}F(n)$-local 
(since $v_n^{-1}F(n)$ and $T(n)$ have the same Bousfield class), and this conclusion is reached by noting that the argument in \cite[proof of Proposition 1.17, (a)]{AJMperiodic} goes through here. The observation that this argument applies in this context also occurs in the antepenultimate paragraph of \cite[proof of Lemma 2.2]{Mahowald/Sadofsky}. 
\end{Rk} 

We return to letting $F(n)$ denote a finite type $n$ complex that is not necessarily an atomic $\mu$-spectrum. 
We recall some standard notation (for example, see \cite[Definition 3.1]{Mahowald/Sadofsky}). 

\begin{Def}\label{finitelocalize}
For each $n \geq 1$ and every prime $p$, $L_n^f$ denotes the Bousfield localization functor determined by the spectrum $T(0) \oplus T(1) \oplus \cdots \oplus T(n)$. 
By \cite[Corollary 3.5]{Mahowald/Sadofsky}, $L_n^f$ is smashing. 
\end{Def} 

\begin{Rk}\label{telescope}
By \cite[Proposition 3.2]{Mahowald/Sadofsky}, there is an equivalence
\[L_n^f F(n) \simeq v_n^{-1} F(n).\]
\end{Rk}

We believe the following result is fairly well-known (for example, see \cite[3.2; Theorem 3.3]{Bousfieldtelescopic} and \cite[Corollary 2.2]{HoveyCech}), but we do not know of a 
reference to it in the literature that -- relative to the setup and definitions in this paper -- 
is straightforward to follow, and so we give a proof. We learned of this result from \cite[Fact 2.11, 2]{Franklandnotes} and in the case when $Z = S^0$, the result is 
\cite[Proposition 5.1]{Mahowald/Sadofsky}.

\begin{Thm}\label{split}
Given $Z \in \mathrm{Sp}$, $n \geq 1$, and $p$ any prime, there is an equivalence 
\[L_{T(n)}Z \simeq L_{F(n)}L_n^f Z.\] 
\end{Thm}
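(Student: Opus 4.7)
The plan is to verify the universal property of $T(n)$-localization, namely that $L_{F(n)} L_n^f Z$ is $T(n)$-local and that the canonical map $Z \to L_{F(n)} L_n^f Z$ is a $T(n)$-equivalence. Together these identify $L_{F(n)} L_n^f Z$ with $L_{T(n)} Z$.

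For the $T(n)$-locality, I would invoke the formula $L_{F(n)} X \simeq \lim_{j \geq 0} (X \otimes M_j)$ recalled at the start of this section, with $X = L_n^f Z$ and with the $M_j$ running through the fixed tower of type $n$ atomic $\mu$-spectra. Since $L_n^f$ is smashing (Definition \ref{finitelocalize}), $L_n^f Z \otimes M_j \simeq Z \otimes L_n^f M_j$, and Remark \ref{telescope} identifies $L_n^f M_j$ with $v_n^{-1} M_j$; thus $L_{F(n)} L_n^f Z \simeq \lim_{j \geq 0} (Z \otimes v_n^{-1} M_j)$. Because each $M_j$ is an atomic $\mu$-spectrum, Remark \ref{ring2} guarantees that $Z \otimes v_n^{-1} M_j$ is $T(n)$-local, and since the class of $T(n)$-local spectra is closed under limits, so is $L_{F(n)} L_n^f Z$.

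For the $T(n)$-equivalence, I would factor the canonical map as $Z \to L_n^f Z \to L_{F(n)} L_n^f Z$ and treat each leg separately. The first leg is the $L_n^f$-localization, hence a $(T(0) \oplus \cdots \oplus T(n))$-equivalence; restricting to the $T(n)$ summand shows it is a $T(n)$-equivalence. The second leg is an $F(n)$-equivalence by construction. To promote this to a $T(n)$-equivalence, I would write $T(n) = v_n^{-1} F(n)$ as the filtered (homotopy) colimit along iterates of a $v_n$-self-map $v \colon \Sigma^d F(n) \to F(n)$, so that smashing any spectrum with $T(n)$ is a colimit of smashings with suspensions of $F(n)$. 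Smashing the given $F(n)$-equivalence with any suspension of $F(n)$ remains an equivalence, and passage to the filtered colimit yields the desired $T(n)$-equivalence.

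The hardest part is the last step, the promotion of an $F(n)$-equivalence to a $T(n)$-equivalence, which is not formal but is made routine by the telescope presentation of $T(n)$. A minor subtlety is that the statement allows an arbitrary finite type $n$ complex $F(n)$ while the tower $\{M_j\}$ consists of atomic $\mu$-spectra of type $n$; this causes no trouble because both $T(n)$ and $L_{F(n)}$ depend only on the Bousfield class of a type $n$ complex.
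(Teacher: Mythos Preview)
Your proof is correct and shares the paper's overall strategy: verify that $L_{F(n)}L_n^f Z$ is $T(n)$-local and that the canonical map from $Z$ is a $T(n)$-equivalence. Your $T(n)$-locality argument is identical to the paper's. For the $T(n)$-equivalence, however, the paper does not factor the map into two legs; instead it fixes $T(n) = v_n^{-1}M_0$ and directly computes $(L_{F(n)}L_n^f Z)\otimes T(n) \simeq Z \otimes T(n)$ by a chain of equivalences, the crux being that $L_n^f S^0 \otimes Z \otimes \Sigma^{-kd}M_0$ is already $M_0$-local (via the atomic $\mu$-spectrum structure on $M_0$), so that $L_{F(n)}$ acts trivially on it. Your argument is more streamlined: the first leg is a $T(n)$-equivalence immediately from the definition of $L_n^f$, and for the second leg you invoke the general lemma that any $F(n)$-equivalence is a $T(n)$-equivalence (via the telescope presentation $T(n)\simeq \colim_k \Sigma^{-kd}F(n)$), which avoids the paper's longer computation and its second appeal to the atomic $\mu$-spectrum property. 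The paper's route has the minor virtue of working entirely with the fixed $M_0$, whereas your argument uses a $v_n$-self-map on the possibly non-atomic $F(n)$; as you correctly observe, this is harmless since only the Bousfield class matters.
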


\begin{proof} 
Since $L_n^f$ is smashing and, as in \cite[Proposition 5.1]{Mahowald/Sadofsky}, there 
is the diagram $\{v_n^{-1}M_j\}_{j \geq 0}$ of telescopes, there are equivalences
\[L_{F(n)}L_n^fZ \simeq \lim_{j \geq 0} (Z \otimes L_n^f M_j) \simeq \lim_{j \geq 0} (Z \otimes v_n^{-1} M_j).\]
Each $M_j$ is an atomic $\mu$-spectrum, so that each $Z \otimes v_{n}^{-1}M_j$ is 
$T(n)$-local, and hence, the three displayed expressions above are $T(n)$-local. Now we only need to show 
that the composition \[Z \to 
L_n^fZ \to L_{F(n)}L_n^fZ\] of canonical maps is a $T(n)$-equivalence. 

We fix a choice for $T(n)$: let \[T(n) = v_{n}^{-1}M_0= \colim_{k \geq 0} \Sigma^{-kd} M_0,\] where $d$ is a fixed integer determined by the self-map used to form the telescope. As in Remark \ref{ring2}, since 
$M_0$ is an atomic $\mu$-spectrum, the argument in \cite[proof of Proposition 1.17, (a)]{AJMperiodic} shows that $L_n^fS^0 \otimes Z \otimes \Sigma^{-kd}M_0$ is $M_0$-local, and hence, $F(n)$-local, since $M_0$ and $F(n)$ have the same Bousfield class (by \cite[page 5]{nilpotencetwo}), for all $k \geq 0$. This justifies the third step in the 
following chain of equivalences, whose first step applies the fact that $\Sigma^{-kd} M_0$ is a 
finite spectrum:
\begin{align*} 
(L_{F(n)}& L_n^fZ) \otimes T(n) \simeq \colim_{k \geq 0} \lim_{j \geq 0} (L_n^fS^0 \otimes Z \otimes M_j \otimes \Sigma^{-kd}M_0)\\
& \simeq \colim_{k \geq 0} L_{F(n)}(L_n^fS^0 \otimes Z \otimes \Sigma^{-kd}M_0) \simeq \colim_{k \geq 0} 
(L_n^fS^0 \otimes Z \otimes \Sigma^{-kd}M_0)\\ & \simeq 
L_n^fS^0 \otimes Z \otimes T(n)
\simeq Z \otimes L_n^fT(n) \simeq Z \otimes L_n^f(L_n^fM_0) 
\simeq Z \otimes L_n^fM_0\\ & \simeq Z \otimes T(n).\end{align*} It follows that 
the aforementioned composition is a $T(n)$-equivalence. 
\end{proof}

\begin{Rk}\label{canusejoint}
In \cite[Section 4.3]{CMNN}, the authors work with $T(n)$-local and $T(n)$-local pro-Galois extensions in the sense of \cite{Rognes}. Theorem \ref{split} above shows that $T(n)$ satisfies \cite[Assumption 1.0.3]{joint} and so \cite{joint} can be used to study ``$T(n)$-local profinite Galois extensions" (especially ones that are consistent and of finite virtual cohomological dimension), 
which differ slightly from $T(n)$-local pro-Galois extensions. 
\end{Rk}

As in \cite[Section 3]{finitebousfield}, set 
\[K({\leq n}) := K(0) \oplus K(1) \oplus \cdots \oplus K(n),\] where 
$K(0) = H\mathbb{Q}$, and as is standard, we let $L_n$ 
denote the Bousfield localization functor $L_{K({\leq n})}$. Notice that given a $p$-local 
spectrum $Z$, 
the canonical $K({\leq n})$-equivalence 
$L_n^f Z \to L_nZ$ (see \cite[page 113]{Mahowald/Sadofsky}) is an equivalence when 
$Z$ is $K({\leq n})$-local, since there are equivalences
\[L_n^f Z \simeq L_n^f S^0 \otimes Z \simeq L_nS^0 \otimes L_n^f S^0 \otimes Z \simeq L_nL_n^f Z \xrightarrow{\,\simeq\,} L_nL_nZ \simeq Z.\] Therefore, if a spectrum $Z$ is 
$K({\leq n})$-local, there are equivalences
\begin{equation}\label{chromaticmorava}\zig
L_{K(n)}Z \simeq L_{F(n)}L_n Z \xleftarrow{\,\simeq\,} L_{F(n)}L_n^f Z \simeq L_{T(n)}Z,
\end{equation}
where the first and last steps applied \cite[Proposition 7.10, (e)]{HS} and Theorem \ref{split}, 
respectively. The observation in (\ref{chromaticmorava}) 
is not original: it is stated in \cite[Section 3]{Ohkawa} and \cite[proof of Corollary 4.20, (iv)]{LMMT}, and 
the latter reference gives a proof. 

\section{A proof of Theorem \ref{gettingtheballrolling}}\label{proofofrollingball} 

We continue to let $n \geq 1$ and $p$ denotes a prime. 
Let \[A_0 \to A_1 \to \cdots \to A_i \to A_{i+1} \to \cdots\] be a diagram in the $\infty$-category $\mathcal{A}_{T(n)}$ of $T(n)$-local ring spectra. 
The colimit $\colim_{i \geq 0}^{\mathcal{A}_{T(n)}} A_i$ of this diagram in $\mathcal{A}_{T(n)}$ satisfies
\[\textstyle{\colim_{i \geq 0}^{\mathcal{A}_{T(n)}} A_i} \simeq \displaystyle{L_{T(n)}(\colim_{i \geq 0} A_i)}\] in $\mathcal{A}_{T(n)}$. 
Then a special case of \cite[Corollary 4.31]{LMMT}, which is 
due to Land, Mathew, Meier, and Tamme, is the remarkable fact that there is an equivalence 
\[L_{T(n+1)}K(\textstyle{\colim_{i \geq 0}^{\mathcal{A}_{T(n)}} A_i}) \simeq \displaystyle{L_{T(n+1)}(\colim_{i \geq 0} L_{T(n+1)}K(A_i))}\] in $\mathrm{Sp}$, which simplifies to 
\begin{equation}\label{neatLMMT}\zig
L_{T(n+1)}K(L_{T(n)}(\displaystyle{\colim_{i \geq 0} A_i})) \simeq \displaystyle{L_{T(n+1)}(\colim_{i \geq 0} K(A_i))}\end{equation} in $\mathrm{Sp}$.

Now we recall that (\ref{diagram}) is the diagram 
$E_n^{hG_n} = E_n^{hU_0} \to E_n^{hU_1} \to \cdots$ 
in the category $\mathrm{CAlg}(\mathrm{Sp})$ of 
commutative algebras, and 
since the forgetful functor $\mathrm{CAlg}(\mathrm{Sp}) \to \mathrm{Sp}$ detects the colimit in $\mathrm{CAlg}(\mathrm{Sp})$ for such a diagram \cite[Corollary 3.2.3.2]{LurieHA}, there is an equivalence
\[E_n \simeq L_{K(n)}(\colim_{i \geq 0} E_n^{hU_i})\] (see \cite[Definition 1.5, Lemma 6.2 and its proof, Proposition 6.4]{DH}). 

For each $i$, $E_n^{hU_i}$ is $K(n)$-local, so that $E_n^{hU_i}$ is $T(n)$-local 
and $\colim_{i \geq 0} E_n^{hU_i}$ is 
$E_n$-local. Then by (\ref{chromaticmorava}), 
\[L_{K(n)}(\colim_{i \geq 0} E_n^{hU_i}) \simeq L_{T(n)}(\colim_{i \geq 0} E_n^{hU_i}).\] 
Furthermore, (\ref{diagram}) is a diagram 
in $\mathcal{A}_{T(n)}$, and hence, equivalence (\ref{neatLMMT}) yields that
\[L_{T(n+1)}K(E_n) \simeq L_{T(n+1)}K(L_{T(n)}(\colim_{i \geq 0} E_n^{hU_i})) \simeq L_{T(n+1)}(\colim_{i \geq 0} K(E_n^{hU_i})).\] 
We recall the tower  
\[M_0 \leftarrow M_1 \leftarrow \cdots \leftarrow M_j \leftarrow \cdots\] 
from Section \ref{oneone} of type $n+1$ generalized Moore spectra. 
Then by Theorem \ref{split}, there is an equivalence
%\begin{equation}\label{great!}\zig
\[L_{T(n+1)}K(E_n) \simeq 
\lim_{j \geq 0} (\colim_{i \geq 0} (K(E_n^{hU_i}) \otimes L_{n+1}^f M_j)),\] 
which completes the proof of Theorem \ref{gettingtheballrolling}. 
%\end{equation}

\section{Continuous homotopy fixed points and Postnikov sheafification}\label{ctshfps}
In this section, we briefly recall some background material on continuous homotopy fixed point spectra and we make some observations about relationships with (pre)sheaves of spectra.

Let $G$ be any profinite group. Let 
$\mathcal{N}$ be a collection of open normal subgroups of $G$ that is cofinal in the collection of all the open normal subgroups of $G$. Suppose that 
$\{X_N\}_{N \in \mathcal{N}}$ is a diagram of $G$-spectra, consisting of a single map $X_N \to X_{N'}$ for each inclusion $N' \subset N$ in $\mathcal{N}$, such that for each $N \in \mathcal{N}$, the $G$-action on $X_N$ factors through $G/N$. Then $\colim_{N \in \mathcal{N}} X_N$ has an induced $G$-action, there is the continuous homotopy fixed point spectrum $(\colim_{N \in \mathcal{N}} X_N)^{hG}$, and if one of the conditions
\begin{itemize}
\item[{(i)}]
$G$ has finite virtual cohomological dimension;
\item[{(ii)}] 
there is a fixed integer $m$ such that $H^s_c(N', \pi_t(\colim_{N \in \mathcal{N}} X_N)) = 0$, for 
all $s > m$, $t \in \mathbb{Z}$, and $N' \in \mathcal{N}$; and
\item[{(iii)}] 
there is a fixed integer $r$ such that $\pi_t(\colim_{N \in \mathcal{N}} X_N) = 0$, for all $t >r$
\end{itemize} holds, then by \cite[Theorem 3.2.1; page 5038: 2nd paragraph]{joint} and \cite[page 911]{2ndnyjm}, there is an equivalence
\begin{equation}\label{model}\zig
(\colim_{N \in \mathcal{N}} X_N)^{hG} \simeq \mathrm{Tot}(\colim_{N \in \mathcal{N}} (\textstyle{\prod}_{(G/N)^\bullet} X_N)),
\end{equation}
where for each $N$, $\textstyle{\prod}_{(G/N)^\bullet} X_N$ is a cosimplicial spectrum 
that satisfies
\[\mathrm{Tot}(\textstyle{\prod}_{(G/N)^\bullet} X_N) \simeq (X_N)^{hG/N},\] with 
$(\prod_{(G/N)^\bullet} X_N)([0]) = \prod_{(G/N)^0} X_N = X_N$ and for each $n \geq 1$, 
\[(\textstyle{\prod_{(G/N)^\bullet} X_N})([n]) = \textstyle{\prod}_{(G/N)^n} X_N\] is the product of copies of $X_N$ indexed by the 
set $(G/N)^n$, which is the $n$-fold product of copies of $G/N$. 
As is well-known, the cosimplicial spectrum $\textstyle{\prod}_{(G/N)^\bullet} X_N$ 
that can be used in this discussion is not unique. 

In general (that is, even when none of conditions (i) -- (iii) are satisfied), associated to $\colim_{N \in \mathcal{N}} X_N$ is the presheaf $\mathcal{P} \in \mathrm{PSh}(\mathcal{O}_G,\mathrm{Sp})$ defined by 
\[G/U \mapsto \mathcal{P}(G/U) := (\colim_{N \in \mathcal{N}} X_N)^{hU}, \ \ \ U \ \text{an open subgroup of }G,\] 
where $(\colim_{N \in \mathcal{N}} X_N)^{hU}$ is the continuous $U$-homotopy fixed points (for example, see \cite[Proposition 3.3.1]{joint} and \cite[Section 2]{tmfbook}). This presheaf extends canonically to a presheaf $\mathcal{P}_{\scriptscriptstyle{\prod}}$ in $\mathrm{PSh}_{\scriptscriptstyle{\prod}}(\mathcal{T}_G, \mathrm{Sp})$, by sending finite coproducts in $\mathcal{T}_G$ to finite products in $\mathrm{Sp}$.  

In the other direction, let $\mathcal{F}$ be a presheaf in $\mathrm{PSh}(\mathcal{O}_G,\mathrm{Sp})$, so that for each $N \in \mathcal{N}$, $\mathcal{F}(G/N)$ has a natural $G$-action that factors through the $G/N$-action, and, as usual, the latter action yields the cosimplicial spectrum 
$\prod_{(G/N)^\bullet} \mathcal{F}(G/N)$. Let 
$\widetilde{\mathcal{F}}_{\scriptscriptstyle{\prod}} \in \mathrm{Sh}(\mathcal{T}_G, \mathrm{Sp})$ denote the Postnikov sheafification of the canonical presheaf $\mathcal{F}_{\scriptscriptstyle{\prod}}$ in $\mathrm{PSh}_{\scriptscriptstyle{\prod}}(\mathcal{T}_G, \mathrm{Sp})$ that is induced by $\mathcal{F}$. Then by 
\cite[Construction 4.6, proof of Proposition 4.9]{ClausenMathewInventiones},
\[\widetilde{\mathcal{F}}_{\scriptscriptstyle{\prod}}(\ast) \simeq \mathrm{Tot}(\colim_{N \in \mathcal{N}} \mathcal{F}_{\scriptscriptstyle{\prod}}((G/N)^{\bullet+1})),\] where for each $N$, 
$(G/N)^{\bullet+1}$ is the usual simplicial object in $\mathcal{T}_{G}$ associated to $G/N$. 
For each $n \geq 0$, the stabilizer subgroup in $G$ of the $G$-action on any element in $(G/N)^{n+1}$ is 
$N$, so that there is an isomorphism $(G/N)^{n+1} \cong \coprod_{(G/N)^n} G/N$ in $\mathcal{T}_G$. It follows that there is an equivalence
\[\widetilde{\mathcal{F}}_{\scriptscriptstyle{\prod}}(\ast) \simeq \mathrm{Tot}(\colim_{N \in \mathcal{N}} \textstyle{\prod}_{(G/N)^\bullet} \mathcal{F}(G/N))\] (for example, see 
\cite[proof of Proposition 4.9, (7) in Proposition 4.11]{ClausenMathewInventiones}).

Each inclusion $N' \subset N$ in $\mathcal{N}$ induces the projection $\pi \: G/N' \to G/N$ in $\mathcal{O}_G$ and 
the map $\mathcal{F}(\pi)\: \mathcal{F}(G/N) \to \mathcal{F}(G/N')$, with source and target equipped with the induced $G$-action, is $G$-equivariant, so that as at the beginning of this section, $\colim_{N \in \mathcal{N}} \mathcal{F}(G/N)$ has a $G$-action. 

The following result is an immediate consequence of the above. 

\begin{Thm}\label{sheafone}
Let $G$ be a profinite group and let $\mathcal{F} \in \mathrm{PSh}(\mathcal{O}_G, \mathrm{Sp})$, with $\mathcal{F}(G/N)$ 
equipped with the natural $G/N$-action for each $N \in \mathcal{N}$. If $G$ and 
$\colim_{N \in \mathcal{N}} \mathcal{F}(G/N)$ satisfy one of conditions $\mathrm{(i)}\,$--$\,\mathrm{(iii)}$, then 
\[(\colim_{N \in \mathcal{N}} \mathcal{F}(G/N))^{hG} \simeq \widetilde{\mathcal{F}}_{\scriptscriptstyle{\prod}}(\ast),\] where the right-hand side of this equivalence is the global sections of the Postnikov sheafification of ${\mathcal{F}}_{\scriptscriptstyle{\prod}}$. 
\end{Thm}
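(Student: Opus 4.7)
The plan is to observe that Theorem \ref{sheafone} follows by identifying both sides of the claimed equivalence with the same totalization, using the two displays already established in Section \ref{ctshfps}. First I would set $X_N := \mathcal{F}(G/N)$ for each $N \in \mathcal{N}$. By hypothesis, $\mathcal{F} \in \mathrm{PSh}(\mathcal{O}_G,\mathrm{Sp})$ equips each $\mathcal{F}(G/N)$ with a natural $G$-action that factors through $G/N$, and each inclusion $N' \subset N$ in $\mathcal{N}$ gives rise to a $G$-equivariant structure map $\mathcal{F}(G/N) \to \mathcal{F}(G/N')$, as noted in the paragraph preceding the theorem. So $\{X_N\}_{N \in \mathcal{N}}$ is a diagram of $G$-spectra of exactly the form considered at the start of Section \ref{ctshfps}.

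Next I would invoke the standing hypothesis that $G$ and $\colim_{N \in \mathcal{N}} \mathcal{F}(G/N) = \colim_{N \in \mathcal{N}} X_N$ satisfy one of conditions (i)--(iii). This is precisely the input needed to apply equivalence (\ref{model}), which yields
\[(\colim_{N \in \mathcal{N}} \mathcal{F}(G/N))^{hG} \simeq \mathrm{Tot}\bigl(\colim_{N \in \mathcal{N}} \textstyle{\prod}_{(G/N)^\bullet} \mathcal{F}(G/N)\bigr).\]

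On the other side, I would quote the computation of $\widetilde{\mathcal{F}}_{\scriptscriptstyle{\prod}}(\ast)$ derived earlier in Section \ref{ctshfps} from \cite[Construction 4.6, proof of Proposition 4.9, (7) in Proposition 4.11]{ClausenMathewInventiones}, together with the identification $(G/N)^{n+1} \cong \coprod_{(G/N)^n} G/N$ in $\mathcal{T}_G$ that comes from the fact that $N$ is the $G$-stabilizer of any element of $(G/N)^{n+1}$. That computation yields
\[\widetilde{\mathcal{F}}_{\scriptscriptstyle{\prod}}(\ast) \simeq \mathrm{Tot}\bigl(\colim_{N \in \mathcal{N}} \textstyle{\prod}_{(G/N)^\bullet} \mathcal{F}(G/N)\bigr).\]
The two right-hand sides agree on the nose, so chaining the equivalences produces the desired $(\colim_{N \in \mathcal{N}} \mathcal{F}(G/N))^{hG} \simeq \widetilde{\mathcal{F}}_{\scriptscriptstyle{\prod}}(\ast)$.

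There is essentially no obstacle here, as both ingredients have been recorded in the preceding discussion; the only point requiring a moment of care is making sure the cosimplicial spectrum $\textstyle{\prod}_{(G/N)^\bullet} \mathcal{F}(G/N)$ produced by the Postnikov-sheafification route (via the $G$-sets $(G/N)^{\bullet+1}$) matches the one used in formula (\ref{model}) (which is built directly from the $G/N$-action on $\mathcal{F}(G/N)$), and for this I would point to the non-uniqueness remark already in Section \ref{ctshfps} together with the computation, standard in this setting, that both models compute $\mathcal{F}(G/N)^{hG/N}$ in the totalization on each $N$-level before passing to the colimit.
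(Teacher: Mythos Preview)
Your proposal is correct and matches the paper's approach exactly: the paper states that Theorem \ref{sheafone} is ``an immediate consequence of the above,'' meaning precisely the two displayed equivalences you invoke---equation (\ref{model}) for the left-hand side and the Clausen--Mathew computation of $\widetilde{\mathcal{F}}_{\scriptscriptstyle{\prod}}(\ast)$ for the right-hand side---applied with $X_N = \mathcal{F}(G/N)$. Your added remark about matching the two cosimplicial models is a reasonable point of care, though the paper does not spell it out.
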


\begin{Def}\label{iv}
Given a profinite group $G$ and $\mathcal{F} \in \mathrm{PSh}(\mathcal{O}_G, \mathrm{Sp})$, 
we define ``condition (iv)" to be
\begin{itemize}
\item[{(iv)}] 
there is some integer $d \geq 0$ such that for each $N \in \mathcal{N}$, 
the $G/N$-action on $\mathcal{F}(G/N)$ is weakly $d$-nilpotent.
\end{itemize}
We refer the reader to \cite[Definition 4.8]{ClausenMathewInventiones} for the meaning of  
``weakly $d$-nilpotent." Our consideration of condition (iv) is partly motivated by [op. cit., Propositions 
4.9, 4.16], and [op. cit., Proposition 4.16, Theorem 4.26 (see its proof and the paragraph 
above Theorem 4.25)] give scenarios implying that condition (iv) holds. When this 
condition is satisfied, [op. cit., Lemma 2.34] gives 
\[\widetilde{\mathcal{F}}_{\scriptscriptstyle{\prod}}(\ast) \simeq \colim_{N \in \mathcal{N}} 
\mathrm{Tot}(\textstyle{\prod}_{(G/N)^\bullet} \mathcal{F}(G/N)) \simeq 
\displaystyle{\colim_{N \in \mathcal{N}} \mathcal{F}(G/N)^{hG/N}}.\]  
\end{Def} 

Now we put together the various strands of discussion of this section in the following result. 

\begin{Thm}
Let $G$ be a profinite group and suppose that $\{X_N\}_{N \in \mathcal{N}}$ is a diagram of 
$G$-spectra consisting of a unique map $X_N \to X_{N'}$ whenever $N' \subset N$ in $\mathcal{N}$, such that for each $N \in \mathcal{N}$, the $G$-action on $X_N$ factors through $G/N$. Let $\widetilde{\mathcal{F}}_{\scriptscriptstyle{\prod}}$ be the Postnikov sheafification 
of the presheaf $\mathcal{F}_{\scriptscriptstyle{\prod}} \in \mathrm{PSh}_{\scriptscriptstyle{\prod}}(\mathcal{T}_G,\mathrm{Sp})$ 
determined by the presheaf
\[\mathcal{F} \: G/U \mapsto (\colim_{N \in \mathcal{N}} X_N)^{hU}, \ \ \ \text{$U$ an open subgroup of $G$}.\] If $G$, $\colim_{N \in \mathcal{N}} X_N$, and $\mathcal{F}$ satisfy any one of conditions $\mathrm{(i)}\,$--$\,\mathrm{(iv)}$, then there are equivalences
\[(\colim_{N \in \mathcal{N}} X_N)^{hG} \simeq \widetilde{\mathcal{F}}_{\scriptscriptstyle{\prod}}(\ast) \simeq \mathrm{Tot}(\colim_{N \in \mathcal{N}} \textstyle{\prod}_{(G/N)^\bullet} X_N).\] 
\end{Thm}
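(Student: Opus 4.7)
The plan is to assemble, case by case, the ingredients already developed in this section. The outer equivalence $(\colim_{N \in \mathcal{N}} X_N)^{hG} \simeq \mathrm{Tot}(\colim_{N \in \mathcal{N}} \prod_{(G/N)^\bullet} X_N)$ is precisely equivalence (\ref{model}), which holds under any one of conditions (i), (ii), or (iii). For the middle term, recall from the paragraph preceding Theorem \ref{sheafone} that, without any additional assumptions, one has
\[\widetilde{\mathcal{F}}_{\scriptscriptstyle{\prod}}(\ast) \simeq \mathrm{Tot}\bigl(\colim_{N \in \mathcal{N}} \textstyle{\prod}_{(G/N)^\bullet} \mathcal{F}(G/N)\bigr).\]
The remaining task is therefore to identify this with the other two expressions under each of the four hypotheses.

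Under cases (i), (ii), or (iii), the approach is to apply Theorem \ref{sheafone} to the presheaf $\mathcal{F}$ itself. A short check transfers the chosen condition from $\{X_N\}$ to the system $\{\mathcal{F}(G/N)\} = \{(\colim_{N'} X_{N'})^{hN}\}$: condition (i) is automatic since it involves only $G$; conditions (ii) and (iii) pass along using the canonical $G$-equivariant maps $X_N \to (\colim_{N'} X_{N'})^{hN}$ together with cofinality of $\mathcal{N}$ in the lattice of open normal subgroups. Theorem \ref{sheafone} then yields $\widetilde{\mathcal{F}}_{\scriptscriptstyle{\prod}}(\ast) \simeq (\colim_N \mathcal{F}(G/N))^{hG}$, and the final identification with $(\colim_N X_N)^{hG}$ is obtained by verifying that the canonical presheaf $\mathcal{F}_{\scriptscriptstyle{\prod}}$ is itself a Postnikov sheaf on $\mathcal{T}_G$, so that the Postnikov sheafification preserves the value at $\ast$, i.e., $\widetilde{\mathcal{F}}_{\scriptscriptstyle{\prod}}(\ast) \simeq \mathcal{F}_{\scriptscriptstyle{\prod}}(\ast) = \mathcal{F}(G/G) = (\colim_N X_N)^{hG}$.

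Under case (iv), I would apply instead the displayed equivalence in Definition \ref{iv}:
\[\widetilde{\mathcal{F}}_{\scriptscriptstyle{\prod}}(\ast) \simeq \colim_{N \in \mathcal{N}} \mathcal{F}(G/N)^{hG/N} \simeq \colim_{N \in \mathcal{N}} \bigl((\colim_{N'} X_{N'})^{hN}\bigr)^{hG/N} \simeq (\colim_{N'} X_{N'})^{hG},\]
where the last step uses the iterated continuous homotopy fixed point identification for $N \triangleleft G$ (and the filtered colimit collapses since each term is equivalent to $(\colim X)^{hG}$). To complete the chain to $\mathrm{Tot}(\colim_N \prod_{(G/N)^\bullet} X_N)$, the weak $d$-nilpotency hypothesis forces a uniform bound on the relevant homotopy fixed point spectral sequences, which lets one interchange $\mathrm{Tot}$ with the filtered $\colim$ and rewrite the Tot as $\colim_N X_N^{hG/N}$, matching the preceding expression.

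The main obstacle will be the identification in cases (i)-(iii) that $\mathcal{F}_{\scriptscriptstyle{\prod}}$ is a Postnikov sheaf (equivalently, that $(\colim_N \mathcal{F}(G/N))^{hG} \simeq (\colim_N X_N)^{hG}$), since $\mathcal{F}(G/N)$ has already absorbed a round of continuous homotopy fixed points and one must avoid double-counting it. This requires carefully combining the cosimplicial model for $(-)^{hU}$ from \cite{joint} with the Čech-theoretic description of sheafification in $\mathrm{PSh}_{\scriptscriptstyle{\prod}}(\mathcal{T}_G, \mathrm{Sp})$ developed in \cite[Section 4]{ClausenMathewInventiones}.
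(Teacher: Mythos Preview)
Your proposal takes an unnecessarily indirect route and, as a result, you flag as the ``main obstacle'' something that the paper's argument does not need at all. The paper proceeds by a single levelwise identification of cosimplicial objects that holds \emph{unconditionally}: for each $n \geq 0$,
\[
\colim_{N \in \mathcal{N}} \textstyle{\prod}_{(G/N)^n} \mathcal{F}(G/N)
\;\simeq\;
\colim_{N \in \mathcal{N}} \textstyle{\prod}_{(G/N)^n} X_N.
\]
This uses only that finite products commute with filtered colimits and the equivalence $\colim_{N'} (\colim_{N''} X_{N''})^{hN'} \simeq \colim_{N''} X_{N''}$ from \cite[Proposition~3.3.1,~(2),~(3)]{joint}. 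Taking $\mathrm{Tot}$ gives $\widetilde{\mathcal{F}}_{\scriptscriptstyle{\prod}}(\ast) \simeq \mathrm{Tot}(\colim_N \prod_{(G/N)^\bullet} X_N)$ with no hypotheses at all. Under (i)--(iii) one then simply quotes (\ref{model}); under (iv) one uses Definition~\ref{iv} plus the iterated-fixed-point identity $((\colim X)^{hN})^{hG/N} \simeq (\colim X)^{hG}$, exactly as you sketched for that half. No claim that $\mathcal{F}_{\scriptscriptstyle{\prod}}$ is already a Postnikov sheaf is ever made or needed.

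Two specific gaps in your version: first, even granting your application of Theorem~\ref{sheafone} to $\mathcal{F}$, the passage from $(\colim_N \mathcal{F}(G/N))^{hG}$ to $(\colim_N X_N)^{hG}$ follows immediately from the same \cite[Proposition~3.3.1]{joint} equivalence above---you do not need $\mathcal{F}_{\scriptscriptstyle{\prod}}$ to be a Postnikov sheaf, and your plan to prove that via \cite[Section~4]{ClausenMathewInventiones} is a detour into genuinely harder territory. Second, in case (iv) your argument for reaching $\mathrm{Tot}(\colim_N \prod_{(G/N)^\bullet} X_N)$ invokes weak $d$-nilpotency to interchange $\mathrm{Tot}$ and $\colim$ for the system $\{\prod_{(G/N)^\bullet} X_N\}$; but condition (iv) posits weak $d$-nilpotency for the $G/N$-action on $\mathcal{F}(G/N) = (\colim_{N'} X_{N'})^{hN}$, not on $X_N$, so that interchange is not justified as stated. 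The paper avoids this entirely because the unconditional levelwise identification already delivers the $\mathrm{Tot}$ equivalence before any of (i)--(iv) is invoked.
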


\begin{proof}
In general (that is, even when none of conditions (i) -- (iv) hold), 
for each $n \geq 0$, there are equivalences
\begin{align*}
\colim_{N \in \mathcal{N}} \textstyle{\prod}_{(G/N)^n} \mathcal{F}(G/N) 
& \simeq \colim_{N \in \mathcal{N}} \textstyle{\prod}_{(G/N)^n} \displaystyle{\bigl(\colim_{N' \in \mathcal{N}} \mathcal{F}(G/N')\bigr)}\\
& = \colim_{N \in \mathcal{N}} \textstyle{\prod}_{(G/N)^n} \displaystyle{\bigl(\colim_{N' \in \mathcal{N}} \bigl(\colim_{N'' \in \mathcal{N}} X_{N''})^{hN'}\bigr)} 
\\ & \simeq \colim_{N \in \mathcal{N}} \textstyle{\prod}_{(G/N)^n} \displaystyle{(\colim_{N'' \in \mathcal{N}} X_{N''})} 
\simeq \colim_{N \in \mathcal{N}} \textstyle{\prod}_{(G/N)^n} X_N,
\end{align*}
where the penultimate step uses \cite[Proposition 3.3.1:~(2),~(3)]{joint}, and hence (again, 
in general), 
there are equivalences
\[
\widetilde{\mathcal{F}}_{\scriptscriptstyle{\prod}}(\ast) \simeq \mathrm{Tot}(\colim_{N \in \mathcal{N}} \textstyle{\prod}_{(G/N)^\bullet} \mathcal{F}(G/N)) 
\simeq \mathrm{Tot}(\displaystyle{\colim_{N \in \mathcal{N}}} \, \textstyle{\prod}_{(G/N)^\bullet} X_N).\]
These last two equivalences, together with the discussion at the beginning of this section, imply the conclusion of the theorem when (i), (ii), or (iii) is satisfied. 
 
Now suppose that condition (iv) holds: the desired conclusion comes from our last two equivalences and
\[\widetilde{\mathcal{F}}_{\scriptscriptstyle{\prod}}(\ast) \simeq \colim_{N \in \mathcal{N}} \mathcal{F}(G/N)^{hG/N} =
\colim_{N \in \mathcal{N}} \bigl((\colim_{N' \in \mathcal{N}} X_{N'})^{hN}\bigr)^{\mspace{-2mu}hG/N} \simeq 
(\colim_{N \in \mathcal{N}} X_{N})^{hG},\] where the last step follows from the fact that 
for each $N$, 
\[\bigl((\colim_{N' \in \mathcal{N}} X_{N'})^{hN}\bigr)^{\mspace{-2mu}hG/N} \simeq (\colim_{N' \in \mathcal{N}} X_{N'})^{hG},\] by [op. cit., Proposition 3.3.1,~(4)].  
\end{proof}

\section{Proofs of Theorems \ref{spectralsequences}, \ref{discrete}, and \ref{discretetwo}}
%:  our two main actors---$T(n+1)$-local}
%{The continuous homotopy fixed points $(L_{T(n+1)}K(E_n))^{hG_n}$ is $T(n+1)$-local}
\label{sectiondiscrete}

We let $n$ be any positive integer and $p$ a prime. Also, we let $\{M_j\}_{j \geq 0}$ be the 
tower of finite type $n+1$ spectra that is described in Section \ref{oneone}.

Now we prove Theorem \ref{spectralsequences}. 
Let $j \geq 0$: by Remark \ref{telescope}, $L_{n+1}^fM_j 
\simeq v_{n+1}^{-1}M_j$, so that 
\begin{align*} 
\colim_{i \geq 0} & (K(E_n^{hU_i}) \otimes L_{n+1}^f M_{j}) 
\simeq L_{T(n+1)}(\colim_{i \geq 0} K(E_n^{hU_i}))
\otimes v_{n+1}^{-1}M_j\\ & \simeq (L_{T(n+1)} K(E_n))
\otimes v_{n+1}^{-1}M_j \simeq K(E_n)
\otimes v_{n+1}^{-1}M_j.\end{align*} Also, by \cite[Proposition 5.1]{Mahowald/Sadofsky} and 
Theorem \ref{split}, there is a tower 
$\{v_{n+1}^{-1} M_j\}_{j \geq 0}$ and as a tower, it is levelwise equivalent to $\{L_{n+1}^f M_j\}_{j \geq 0}$. With these observations, Theorem \ref{spectralsequences} 
is an immediate consequence of \cite[Proposition 3.1.2]{Geisser}, \cite[Theorems 8.5, 8.8]{cts}, and \cite[Theorem 2.2]{Jannsen}.

To prove Theorem \ref{discrete}, we set ``$T$"  in the notation of \cite[Section 6.1]{joint} equal to 
$T(0) \oplus T(1) \oplus \cdots \oplus T(n+1)$. We have
\begin{align*}
(L&_{T(n+1)}K(E_n))^{hG_n} \simeq 
\lim_{j \geq 0} \bigl(\colim_{i \geq 0}(L_{n+1}^fK(E_n^{hU_i}) \otimes M_j)\bigr)^{hG_n}\\
& \mspace{-5mu} \simeq \lim_{j \geq 0} \bigl((\colim_{i \geq 0} L_{n+1}^fK(E_n^{hU_i})) \otimes M_j\bigr)^{hG_n}
\simeq 
\lim_{j \geq 0} \bigl((\colim_{i \geq 0} L_{n+1}^fK(E_n^{hU_i}))^{hG_n} \otimes M_j\bigr)\\ 
& \mspace{-5mu} \simeq \mspace{-1mu} L_{F(n+1)}((\colim_{i \geq 0} L_{n+1}^fK(E_n^{hU_i}))^{hG_n}) \mspace{-1mu} \simeq \mspace{-1mu}
L_{F(n+1)}L_{n+1}^f((\colim_{i \geq 0} L_{n+1}^fK(E_n^{hU_i}))^{hG_n})\\
& \mspace{-5mu} \simeq L_{T(n+1)}((\colim_{i \geq 0} L_{n+1}^f K(E_n^{hU_i}))^{hG_n}),
\end{align*} 
where the first step is because $L_{n+1}^f$ is smashing; the third step follows from the fact that 
each $M_j$ is a finite spectrum and $G_n$ has finite virtual cohomological dimension (for 
example, see \cite[Proposition 3.10]{Mitchell} and \cite[Theorem 3.2.1]{joint}); 
because $\displaystyle{\colim_{i \geq 0} L_{n+1}^fK(E_n^{hU_i})}$ is $T$-local (since $L_{n+1}^f$ is smashing), $\displaystyle{(\colim_{i \geq 0} L_{n+1}^fK(E_n^{hU_i}))^{hG_n}}$ 
is $T$-local, by \cite[proof of Lemma 6.1.5]{joint}, and this yields the fifth step; and 
Theorem \ref{split} 
gives the last step. This completes the proof of Theorem \ref{discrete}. 

We continue with the above context and prove Theorem \ref{discretetwo}: given any 
choice of $F(n+1)$, with $T(n+1)$ now set equal to $v_{n+1}^{-1}F(n+1)$, 
\begin{align*}
(K(E_n) & \otimes T(n+1))^{hG_n} 
%\simeq \mathrm{Tot}(\colim_{i \geq 0} \textstyle{\prod}_{(G_n/U_i)^\bullet} 
%(K(E_n^{hU_i}) \otimes T(n+1))\\
\simeq \mathrm{Tot}\bigl(\colim_{i \geq 0} \textstyle{\prod}_{(G_n/U_i)^\bullet} 
(K(E_n^{hU_i}) \otimes L_{n+1}^fF(n+1))\bigr)
\\
& \simeq \bigl(\mathrm{Tot}(\colim_{i \geq 0} \textstyle{\prod}_{(G_n/U_i)^\bullet} 
L_{n+1}^f K(E_n^{hU_i}))\bigr) \otimes F(n+1)\\
& \simeq (\colim_{i \geq 0} L_{n+1}^f K(E_n^{hU_i}))^{hG_n}\otimes F(n+1)
\\ & \simeq 
L_{n+1}^f((\colim_{i \geq 0} L_{n+1}^f K(E_n^{hU_i}))^{hG_n})\otimes F(n+1)
\\ & \simeq 
(\colim_{i \geq 0} L_{n+1}^f K(E_n^{hU_i}))^{hG_n}\otimes T(n+1)\\ &
\simeq (L_{T(n+1)}K(E_n))^{hG_n} \otimes T(n+1),\end{align*} where the second equivalence applies the fact that $F(n+1)$ is a finite spectrum, the fourth equivalence uses the justification 
given above for the fifth step of the proof of Theorem \ref{discrete}, and the last equivalence is by Theorem \ref{discrete}. In this sequence of equivalent expressions, the first, fourth, sixth, and seventh ones are the ones explicitly required by Theorem \ref{discretetwo}. 

\section{How the two hypotheses give $(L_{T(n+1)}K(E_n))^{hG_n} \simeq L_{T(n+1)}K(L_{K(n)}S^0)$}\label{strategy}
In this section we prove Theorem \ref{wouldgiveprogress}. Also, since the proof is helpful for verifying the 
related Theorem \ref{moreprogressmaybe}, after giving the proof of the former result, we prove the latter one. 

Let $n \geq 1$, with $p$ equal to any prime. We let $\{M_j\}_{j \geq 0}$ be the tower of generalized Moore spectra from 
Section \ref{oneone}, with 
each $M_j$ a finite spectrum of type $n+1$ and an atomic $\mu$-spectrum, and $T(n+1)$ denotes a representative of the Bousfield class of $v_{n+1}^{-1}M_0$. We assume that the following two statements are 
true:
\begin{itemize}
\item[{(H1)}]
The map
\[\ \ \ \ \ \ \ \colim_{i \geq 0} K(\eta \scriptstyle{\nearrow} \mspace{1.5mu}\displaystyle{}i) \: 
L_{T(n+1)}K(E_n^{hG_n}) \to \colim_{i \geq 0} (L_{T(n+1)}K(E_n^{hU_i}))^{hG_n/U_i}\] 
is a $T(n+1)$-equivalence.
\item[{(H2)}]
For each $j \geq 0$, with 
$C(n,j,i)^\bullet := \textstyle{\prod}_{(G_n/U_i)^\bullet} (K(E_n^{hU_i}) \otimes L_{n+1}^fM_j)$ 
for all $i \geq 0,$ the canonical map
$\displaystyle{\colim_{i \geq 0} \displaystyle{} \mathrm{Tot}(C(n,j, i)^\bullet)
\to \mathrm{Tot}(\colim_{i \geq 0} C(n,j,i)^\bullet)}$ is an equivalence. 
\end{itemize} 
To prove Theorem \ref{wouldgiveprogress}, we use each assumption only once and the usages are marked with ``{\it By} (H1)" and 
``{\it applies} (H2)."

By Remark 
\ref{ring2}, for every $j$ and any spectrum $Z$, $Z \otimes v_{n+1}^{-1}M_j$ is $T(n+1)$-local, 
and hence, by Remark \ref{telescope}, $Z \otimes L_{n+1}^fM_j$ is $T(n+1)$-local. Also, for 
all $j$ and $Z$, 
\[Z \otimes L_{n+1}^f M_j \simeq  (L_{T(n+1)}Z) \otimes L_{n+1}^f M_j,\] since 
$Z \otimes L_{n+1}^f M_j \simeq (L_{L_{n+1}^f M_j}Z) \otimes L_{n+1}^f M_j$.

A helpful tool for our argument is the natural equivalence 
\[Z^{hH} \simeq L_{T(n+1)}(Z_{hH}) = L_{T(n+1)}(\colim_H Z),\] 
where $Z$ is any $T(n+1)$-local spectrum with an 
action by a finite group $H$ (\cite[Theorem 1.5]{KuhnInventiones}, partly \cite{MahowaldShick}; there are helpful presentations of this result in \cite[Section 1]{shorttelescopic}, \cite[page 350]{KuhnInventiones}). The action of $H$ on $Z$ induces a diagram 
$BH \to \mathrm{Sp}_{T(n)}$, where $\mathrm{Sp}_{T(n)}$ is the $\infty$-category of 
$T(n)$-local spectra, and in the above equivalence, the rightmost expression is the colimit 
of this diagram.

{\it By} (H1), the map $(\colim_{i \geq 0} K(\eta \scriptstyle{\nearrow} 
\mspace{1.5mu}\displaystyle{}i)) \otimes T(n+1)$ is an equivalence, which implies that for each $j$, the map $\colim_{i \geq 0} (K(\eta \scriptstyle{\nearrow} 
\mspace{1.5mu}\displaystyle{}i) \otimes L_{n+1}^fM_j)$ is an equivalence: that is, the canonical 
maps \[(L_{T(n+1)}K(E_n^{hG_n})) \otimes L_{n+1}^fM_j \to \colim_{i \geq 0} 
((L_{T(n+1)}K(E_n^{hU_i}))^{hG_n/U_i} \otimes L_{n+1}^fM_j)\] are equivalences. This 
gives the last step in the equivalences
\begin{align*}
L_{T(n+1)}K(L_{K(n)}S^0) & \simeq 
\lim_{j \geq 0} (K(E_n^{hG_n}) \otimes L_{n+1}^fM_j) 
%\\ & \overset{\phantom{\smash{\scriptstyle{(\,\ast\,)}}}}{\simeq}  
\\ & \simeq \lim_{j \geq 0} ((L_{T(n+1)}K(E_n^{hG_n})) \otimes L_{n+1}^fM_j)
\\ & \simeq 
%\overset{\smash{\scriptstyle{(\,\ast\,)}}}{\simeq} 
\lim_{j \geq 0} \colim_{i \geq 0} ((L_{T(n+1)} K(E_n^{hU_i}))^{hG_n/U_i} \otimes L_{n+1}^fM_j). 
\end{align*}
%\\
%& \simeq 
%\lim_{j \geq 0} L_{T(n+1)}((\colim_{i \geq 0} (L_{T(n+1)} K(E_n^{hU_i}))^{hG_n/U_i})) \wedge L_{n+1}^fM_j

Now we let $i$ and $j$ be fixed non-negative integers and consider 
\[(L_{T(n+1)} K(E_n^{hU_i}))^{hG_n/U_i} \otimes L_{n+1}^fM_j.\] The homotopy fixed points and $(-) \otimes L_{n+1}^fM_j$ commute to yield the natural 
equivalence
\[(L_{T(n+1)} K(E_n^{hU_i}))^{hG_n/U_i} \otimes L_{n+1}^fM_j 
\simeq ((L_{T(n+1)} K(E_n^{hU_i})) \otimes L_{n+1}^fM_j)^{hG_n/U_i},\] because 
the homotopy orbits and $(-) \otimes L_{n+1}^fM_j$ commute. In more 
detail, we have
\begin{align*}
(L_{T(n+1)} & K(E_n^{hU_i}))^{hG_n/U_i} \mspace{-1mu} \otimes \mspace{-1mu} L_{n+1}^fM_j
\simeq 
L_{T(n+1)}(\colim_{G_n/U_i} L_{T(n+1)} K(E_n^{hU_i})) \mspace{-1mu}\otimes\mspace{-1mu} L_{n+1}^fM_j
\\ & \mspace{0mu} \simeq 
 (\colim_{G_n/U_i} L_{T(n+1)} K(E_n^{hU_i})) \otimes L_{n+1}^fM_j 
 %\overset{\smash{\scriptstyle{(\ast \ast)}}}{\simeq}
 \\ & \mspace{0mu}
 \simeq
 L_{T(n+1)}((\colim_{G_n/U_i} L_{T(n+1)} K(E_n^{hU_i})) \otimes L_{n+1}^fM_j)\\ & \mspace{0mu} 
 %\overset{\phantom{\smash{\scriptstyle{(\ast \ast)}}}}{\simeq}  
\simeq
L_{T(n+1)}(\colim_{G_n/U_i} ((L_{T(n+1)} K(E_n^{hU_i})) \otimes L_{n+1}^fM_j))
\\ & \mspace{0mu}\simeq 
%\overset{\phantom{\smash{\scriptstyle{(\ast \ast)}}}}{\simeq}  
((L_{T(n+1)} K(E_n^{hU_i})) \otimes L_{n+1}^fM_j)^{hG_n/U_i}.
\end{align*} 
%where the equivalence marked by ``$\scriptstyle{(\ast \ast)}$" is because each 
%\[(\colim_{G_n/U_i} L_{T(n+1)} K(E_n^{hU_i})) \otimes L_{n+1}^fM_j\] is $T(n+1)$-local. 
Since $((L_{T(n+1)} K(E_n^{hU_i})) \otimes L_{n+1}^fM_j)^{hG_n/U_i} 
\simeq (K(E_n^{hU_i}) \otimes L_{n+1}^fM_j)^{hG_n/U_i},$ we obtain the natural equivalence
\[(L_{T(n+1)} K(E_n^{hU_i}))^{hG_n/U_i} \otimes L_{n+1}^fM_j
\simeq \mathrm{Tot}(C(n, j, i)^\bullet).\]

Putting the conclusions of the last two paragraphs together and then pushing further, we obtain
\begin{align*}
L_{T(n+1)} & K(L_{K(n)}S^0) \simeq 
\lim_{j \geq 0} \colim_{i \geq 0} ((L_{T(n+1)} K(E_n^{hU_i}))^{hG_n/U_i} \otimes L_{n+1}^fM_j)
\\ & \simeq \lim_{j \geq 0} \colim_{i \geq 0} \mathrm{Tot}(C(n, j, i)^\bullet) 
\simeq
%\overset{\smash{\scriptstyle{(\ast \ast \ast)}}}{\simeq} 
\lim_{j \geq 0} \mathrm{Tot}(\colim_{i \geq 0} C(n, j, i)^\bullet)\\
%& \overset{\phantom{\smash{\scriptstyle{(\ast \ast \ast)}}}}{\simeq} 
& \simeq \lim_{j \geq 0} \bigl(\colim_{i \geq 0} (K(E_n^{hU_i}) \otimes L_{n+1}^fM_j)\bigr)^{hG_n} 
=
% \overset{\phantom{\smash{\scriptstyle{(\ast \ast \ast)}}}}{=} 
(L_{T(n+1)}K(E_n))^{hG_n},
\end{align*} 
where the third equivalence %``$\scriptstyle{(\ast \ast \ast)}$" is 
{\it applies} (H2) and the fourth equivalence uses (\ref{model}). 

Now we prove Theorem \ref{moreprogressmaybe}. We continue the conventions used 
above, but now we define $T(n+1) := v_{n+1}^{-1}F(n+1)$, where $F(n+1)$ is an atomic 
$\mu$-spectrum. We assume the validity of the following two statements:
\begin{itemize}
\item[{$\mathrm{(H1}^{\prime}\mathrm{)}$}] The map $\displaystyle{\colim_{i \geq 0} K(\eta} \scriptstyle{\nearrow} \mspace{1.5mu}\displaystyle{}i)$ is a $T(n+1)$-equivalence. 
\item[{$\mathrm{(H2}^{\prime}\mathrm{)}$}]
%[{\raisebox{.11pt}{\textcircled{\raisebox{-.4pt} {\footnotesize{B}}}}}]
The canonical map $\displaystyle{\colim_{i \geq 0} \displaystyle{} \mathrm{Tot}(C(n, i)^\bullet)
\to \mathrm{Tot}(\colim_{i \geq 0} C(n,i)^\bullet)}$ is an equivalence, 
where for each $i$, $C(n,i)^\bullet := \textstyle{\prod}_{(G_n/U_i)^\bullet} (K(E_n^{hU_i}) \otimes T(n+1))$.
\end{itemize} We use each assumption just once and we mark the occurrences with 
``{\it by} $\mathrm{(H1}^{\prime}\mathrm{)}$" and ``{\it by} $\mathrm{(H2}^{\prime}\mathrm{)}$." 

We have
\begin{align*}
(K(E_n) \otimes T(n+1))^{hG_n} & \simeq 
\mathrm{Tot}(\colim_{i \geq 0} \textstyle{\prod}_{(G_n/U_i)^\bullet} (K(E_n^{hU_i}) \otimes T(n+1)))\\
& \simeq  \colim_{i \geq 0} \mathrm{Tot}(\textstyle{\prod}_{(G_n/U_i)^\bullet} (K(E_n^{hU_i}) \otimes T(n+1))) \\ &\simeq \colim_{i \geq 0} (K(E_n^{hU_i}) \otimes T(n+1))^{hG_n/U_i},
\end{align*} where the first equivalence applies (\ref{model}) and the second equivalence 
is {\it by} $\mathrm{(H2}^{\prime}\mathrm{)}$. Since $F(n+1)$ is an atomic $\mu$-spectrum, 
$Z \otimes T(n+1)$ is $T(n+1)$-local for any spectrum $Z$, by Remark \ref{ring2}. Thus, 
as in the above proof of Theorem \ref{wouldgiveprogress}, there is a natural 
equivalence 
\[(K(E_n^{hU_i}) \otimes T(n+1))^{hG_n/U_i} \simeq (L_{T(n+1)}K(E_n^{hU_i}))^{hG_n/U_i} 
\otimes T(n+1),\] for each $i$. We now conclude that
\begin{align*}
(K(E_n) & \otimes T(n+1))^{hG_n} \simeq  \colim_{i \geq 0} (K(E_n^{hU_i}) \otimes T(n+1))^{hG_n/U_i}\\ & \simeq \colim_{i \geq 0} ((L_{T(n+1)}K(E_n^{hU_i}))^{hG_n/U_i} 
\otimes T(n+1))\\
& \simeq (\colim_{i \geq 0} (L_{T(n+1)}K(E_n^{hU_i}))^{hG_n/U_i}) 
\otimes T(n+1) \\ & \simeq L_{T(n+1)}K(L_{K(n)}S^0) \otimes T(n+1) 
\simeq K(L_{K(n)}S^0) \otimes T(n+1),\end{align*} where the 
penultimate step is {\it by} $\mathrm{(H1}^{\prime}\mathrm{)}$.

\bibliographystyle{plain}

\end{document}